\providecommand{\R}{\mathbb{R}}
\providecommand{\N}{\mathbb{N}}
\renewcommand{\leq}{\leqslant}
\renewcommand{\geq}{\geqslant}
\renewcommand{\div}{\operatorname{div}}
\newcommand{\curl}{\operatorname{curl}}
\newcommand{\Id}{\operatorname{Id}}
\newtheorem{Theorem}{Theorem}
\newtheorem{Definition}{Definition}
\newtheorem{Proposition}{Proposition}
\newtheorem{Lemma}{Lemma}
\begin{document}

\date{\today}
\title{On the  ``viscous incompressible fluid + rigid body''  system with Navier conditions}

\author{Gabriela Planas\footnote{Departamento de Matem\'atica, Instituto de  Matem\'atica, Estat\'\i stica  e  Computa\c{c}\~ao Cient\'\i fica,
Universidade Estadual de Campinas, Rua Sergio Buarque de Holanda 651, 13083-859 Campinas, SP, Brasil}
, \ Franck Sueur\footnote{CNRS, UMR 7598, Laboratoire Jacques-Louis Lions, F-75005, Paris, France}
\footnote{UPMC Univ Paris 06, UMR 7598, Laboratoire Jacques-Louis Lions, F-75005, Paris, France}
}

\maketitle

\begin{abstract}
In this paper we consider the motion of a rigid body  in a viscous incompressible fluid when some Navier slip conditions are prescribed on the body's boundary.
The whole system  ``viscous incompressible fluid + rigid body''  is assumed to occupy the full space $\R^{3}$.
We start by proving the existence of global weak solutions to the Cauchy problem.
Then, we exhibit several properties of these solutions.
 First,  we show that the added-mass effect can be computed which yields better-than-expected regularity (in time) of the solid velocity-field.
More precisely we prove that the solid translation and rotation velocities are in the Sobolev space $H^1$.
Second, we show that the case with the body fixed can be thought as the limit of infinite inertia of this system, that is when the solid density  is multiplied by a factor converging to $+\infty$.
Finally we prove the convergence in the energy space  of weak solutions ``\`a la Leray" to smooth solutions of  the system ``inviscid incompressible fluid + rigid body''  as the viscosity goes to zero, till the lifetime $T$  of the smooth solution of the inviscid system.
Moreover we show that the rate of convergence is optimal with respect to the viscosity and that the solid translation and rotation velocities converge in  $H^1 (0,T)$.
\end{abstract}

\section{Introduction}

Recently several efforts have been made to establish a Cauchy theory for various models involving a fluid and a immersed structure.
In particular in the case where the structure is a rigid body and where the fluid is incompressible, there now exists a quite satisfactory range of results, at least in view of what is known in the case of a fluid alone; we may cite, among others,  \cite{ht,Ortega,ogfstt,uniq,Ortega2,rosier,WangZang,shrinking,geodesic,GS,uniq,GS3} for the case of inviscid fluid  and \cite{gm,hs,Serre,DE1,DE2,Conca1,Conca2,FE1,FE2,FE3} for the case of viscous fluid.

\par
\  \par

In this paper we deal with the issue of the inviscid limit for the system ''viscous incompressible fluid + rigid body", which involves an immersed rigid body moving into a viscous incompressible fluid driven by the Navier-Stokes equations.

Formally dropping the viscosity in the equations of the system yields the system ``inviscid incompressible fluid + rigid body", which involves an immersed rigid body moving into a inviscid incompressible fluid driven by the Euler equations.

\par
\  \par
This paper aims at giving a justification of this formal procedure.

It is expected that the issue of justifying the inviscid limit is at least as difficult in the case of a moving body as in the case of a fixed body.
Indeed the case of a fixed body can be seen as the limit case where the body's inertia becomes infinite, cf. Sections \ref{SectionInertiaNS} and  \ref{SectionInertiaEuler}.

This is quite a bad news since the inviscid limit is already quite intricate in the case of a fixed boundary, because of the boundary layers phenomenon.
Moreover it is not clear a priori if it is possible to pass to the limit in the body's dynamics, with such singular variations in the neighborhood.

\par
\  \par

These boundary layers issues are particularly involved in the case where one prescribes the no-slip condition on a fixed fluid-solid interface.
In particular, it is not known at the time of writing if there is convergence in the energy space.
A longstanding approach in this domain is Prandtl's theory, but this theory fails to model flows with too small viscosity in general, see for instance \cite{constantin,BardosTiti,E,grenier,DGVD,guo}.
However a necessary and sufficient condition for  the convergence to the Euler equations  in the energy space has been given by Kato in \cite{Tosio}; it states that the energy dissipation rate of the viscous  flows in a boundary strip  of width proportional to the viscosity vanishes.
Even if little is known about whether or not this condition is verified for a general given flow, this result gives a nice insight of the scale for which the description of the flow is necessary to understand the inviscid limit.

In the paper \cite{KatoBody} the second author proved an extension of Kato's result in
the case of the system ``viscous incompressible fluid + rigid body''  with  the no-slip condition on the fluid-solid interface: the convergence to the  system ``inviscid incompressible fluid + rigid body"  holds true in the energy space if and only if the energy dissipation rate of the viscous  flows in a boundary's neighborhood  of width proportional to the viscosity vanishes.
As in the case of a fixed boundary it is not clear how to check this condition but this result seems to indicate that the issue of the inviscid limit in the case of a moving body is maybe not much harder than the one in the case of a fixed body.

\par
\  \par

In this paper we will  prescribe some Navier conditions on the fluid-solid interface, which   encode that
the fluid slips with some friction on this boundary.
In the case of a fixed boundary it is by now well understood that the issue of the inviscid limit is simplified compared to the no-slip conditions, at least when the friction coefficient is not too big;
in particular the convergence holds true in the energy space (cf. for instance \cite{BGP,dV,cmr,Iftimie-Planas,Iftimie-Sueur,MasmoudiRousset,Paddick}).
We prove here a similar result in the case of a moving body.

\par
\  \par

In fact we even prove a better convergence of the body's dynamics (with respect to the fluid's dynamics), i.e. a  convergence in the Sobolev space $H^1$.
This surprising  result uses a well-known phenomenon in the theory of the systems involving an incompressible flow and a structure, namely the added mass phenomenon, for which we refer  for instance to \cite{Childress,Galdi}, and which can be computed in the present case of the Navier conditions.

\par
\  \par

Let us say here for sake of clarity that we will consider the case of a physical space of three dimensions and we will assume that the system occupies the whole of $\R^3$ to avoid the extra difficulties which would be implied by an exterior boundary.

After finishing this paper we became aware of the work \cite{DGV-H} by G\'erard-Varet and Hillairet, which  established the existence of weak solutions to the ``viscous incompressible fluid + rigid body''  system with Navier slip conditions in the case where the  whole system occupies a bounded domain of $\R^{3}$, rather than the full space $\R^{3}$, up to collision. It would be therefore interesting to look for some extensions of the properties exhibited here in such a case.

\par
\  \par

The paper is organized as follows.
\begin{itemize}
\item In Section \ref{NavierNS} we introduce the  system ``viscous incompressible fluid + rigid body'' with Navier conditions.
\item In Section \ref{NavierLeray} we establish the existence of an appropriate notion of weak solutions ``\`a la Leray" of this system, after  a change of variables. We will also establish a regularity property of the body's dynamics  and we will  discuss the infinite inertia limit for this system.

\item In Section \ref{Sl} we recall a result of \cite{KatoBody} which establishes the existence of  smooth local-in-time solutions of the inviscid system and  we also discuss the infinite inertia limit.

\item In Section \ref{IL} we state the main result of this paper about the convergence of the system ``viscous incompressible fluid + rigid body''  to  the system ``inviscid incompressible fluid + rigid body''  as the viscosity goes to zero.
\item Finally we give the proof of this result in Section
\ref{proof}.
\end{itemize}

%{\bf  Note added in proof}.
%After finishing this paper we became aware of the work \cite{DGV-H} by G\'erard-Varet and Hillairet, which  established the existence of weak solutions to the ``viscous incompressible fluid + rigid body''  system with Navier slip conditions in the case where the  whole system occupies a bounded domain of $\R^{3}$, rather than the full space $\R^{3}$, up to collision. It would be therefore interesting to look for some extensions of the properties exhibited here in such a case.

\section{The system ``viscous incompressible fluid + rigid body'' with Navier conditions}
\label{NavierNS}

We consider a rigid body initially occupying  a closed, bounded, connected and simply connected subset $\mathcal{S}_0 \subset \R^3$ with smooth boundary.
It rigidly moves so that at  time $t$  it occupies an isometric  domain denoted by $\mathcal{S}(t)$.
More precisely if we denote by $h (t)$ the position of the center of mass of the body at time $t$, then
there exists a rotation matrix $Q (t) \in SO(3)$,
such that  the position $\eta (t,x) \in \mathcal{S} (t)$  at
the time $t$ of the point fixed to the body with an initial position $x$ is
\begin{equation*}
\eta (t,x) := h (t) + Q (t)(x- h (0)) .
\end{equation*}
Of course this yields that $Q(0) = \Id_3$.

Moreover since  $Q^{T} Q' (t) $ is skew symmetric there exists only one $r  (t)$ in $\R^3$ such that for any $x \in \R^3$,
\begin{equation*}
Q^{T} Q' (t)  x = r(t) \wedge x .
\end{equation*}
Accordingly, the solid velocity is given by
\begin{equation*}
U_{{\mathcal S}}(t,x) := h'(t)  + R(t) \wedge (x-h(t)) \text{ with } R(t) := Q(t)r(t) .
\end{equation*}
Given a positive function $\rho_{{\mathcal S}_{0}} $, say in $ L^{\infty}({\mathcal S}_{0};\R)$, describing the density in the solid initially:
 the solid mass $m>0$, the initial position $h_0$  of the center of mass, and the initial value of  the  inertial matrix ${\mathcal J}_0$ can be computed by it first moments:
\begin{equation} \label{EqMasse}
m :=  \int_{\mathcal{S}_0} \rho_{{\mathcal S}_0} (x) dx  > 0,
\end{equation}
\begin{equation} \label{Eq:CG}
m h_0 :=   \int_{\mathcal{S}_0} x \rho_{{\mathcal S}_0} (x) dx,
\end{equation}
\begin{equation}\label{eqJ}
\mathcal{J}_{0}  := \int_{ \mathcal{S}_{0}}  \rho_{{\mathcal S}_{0}}(x)   \big( | x- h_0 |^2 \Id_3 -(x- h_0)  \otimes   (x- h_0)    \big) dx  .
\end{equation}
At time time $t >0$, the density in the solid is given, for $x \in \mathcal{S}(t)$, by
\begin{equation*}
\rho_{{\mathcal S}} (t,x) := \rho_{{\mathcal S}_0} (\eta (t,x)^{-1} (x) ) ,
\end{equation*}
where $\eta (t,x)^{-1} $ denotes the inverse at time $t$ of the diffeomorphism $ x \mapsto  \eta(t, x)$;
so that, of course, the mass is preserved:
\begin{equation*}
m =  \int_{\mathcal{S} (t)} \rho_{{\mathcal S}} (t,x) dx  .
\end{equation*}
Moreover, the position of the center of mass $h(t)$ and the  inertial matrix ${\mathcal J} (t)$ are given by
\begin{equation*}
m h (t) :=   \int_{ \mathcal{S} (t) } x \rho_{{\mathcal S}} (t,x) dx,
\end{equation*}
\begin{equation*}
\mathcal{J} (t) := \int_{ \mathcal{S} (t)}  \rho_{{\mathcal S}} (t,x)   \big( | x- h (t) |^2 \Id_3 - (x- h (t))  \otimes   (x- h (t))    \big) dx  ,
\end{equation*}
so that   $ {\mathcal J} (t)$ is symmetric positive definite and satisfies  Sylvester's law:
\begin{equation*}
 \mathcal{J}(t) = Q(t)  \mathcal{J}_0 Q^{T} (t) .
 \end{equation*}
\par
\  \par

Let us assume that in the rest of the space, that is, in the open set
$\mathcal{F}(t) := \R^3 \setminus {\mathcal S} (t)$,
there evolves a viscous incompressible  fluid.
We denote correspondingly ${\mathcal F}_{0}:=\R^{3} \setminus {\mathcal S}_{0}$ the initial fluid domain. \par
The complete system driving the dynamics reads
\begin{gather}
\label{NS1}
\frac{\partial U}{\partial t}+(U\cdot\nabla)U + \nabla P = \nu  \Delta U  \ \text{ for } \ x \in \mathcal{F}(t), \\
\label{NS2}
\div U = 0 \ \text{ for } \  x \in \mathcal{F}(t) ,  \\
\label{NS3}
U \cdot n =  U_\mathcal{S}\cdot n   \ \text{ for } \ x\in \partial \mathcal{S}  (t),  \\
\label{NS4}
(D(U) n )  \wedge n = - \alpha (U - U_\mathcal{S}) \wedge n   \ \text{ for } \ x\in \partial \mathcal{S}  (t),  \\
\label{Solide1}
m  h'' (t) =  - \int_{ \partial \mathcal{S} (t)} \Sigma n \, ds ,  \\
\label{Solide2}
(\mathcal{J}  R )' (t) = -    \int_{ \partial   \mathcal{S} (t)}  (x-  h(t) )  \wedge \Sigma  n \, ds , \\
\label{NSci2}
U |_{t= 0} = U_0 , \\
\label{Solideci}
h (0)=  0 , \ h' (0)= \ell_0, \ R  (0)=  r_0.
\end{gather}
Here  $U$ and  $P$  denote the fluid velocity and pressure, which are   defined on ${\mathcal F}(t)$ for each $t$, and $ \nu > 0 $ is the fluid viscosity.
  The fluid is supposed to be  homogeneous of density $1$, to simplify the notations and without any loss of generality.
The Cauchy stress tensor is defined by
\begin{equation*}
 \Sigma := -P \Id_3 + 2 \nu D(U) ,
\end{equation*}
where $D(U) $ is the deformation tensor
\begin{equation*}
 D(U) := ( \frac{1}{2} ( \partial_{j} U_{i} +  \partial_{i} U_{j} ) )_{1 \leqslant i,j \leqslant  3} .
\end{equation*}
Above $n$ denotes the unit outward normal on the boundary of the fluid domain,
$ds$ denotes the integration element on this boundary
 and $\alpha \geq 0 $ is a material constant (the friction
coefficient).
Let us precise that we choose here to consider the case where  $\alpha $ is constant but it will be possible to consider the more general case where $\alpha $ depends smoothly on $t,x$ as well with only a few modifications.

Observe that, without loss of generality, we have assumed that $h(0)=0$ which means that the body is centered at the origin at the initial time $t=0$.

\
In the sequel the integrals over open subsets of $\R^{3}$ will always be  taken with respect to the Lebesgue measure $dx$ and the integrals over hypersurfaces of $\R^{3}$ will always be  taken with respect to the  surface measure.
\par
\  \par

The equations  \eqref{NS1}-\eqref{NS2} are the incompressible Navier-Stokes equations.

The equations  (\ref{Solide1})  and (\ref{Solide2})  are  the Newton's balance laws for linear and angular momenta: the fluid acts on the body through pressure forces.

The equations  \eqref{NS3}-\eqref{NS4} are referred to as the Navier conditions and encode that  the body's boundary is impermeable and that
the fluid slips with some friction on this boundary.

This condition was introduced phenomenologically by Navier in $1823$, cf.
\cite{Navier}.
Let us mention some recent results about the derivation of such a condition, on the one hand from
kinetic models (derivation from the Boltzmann equation with accommodation boundary conditions)
see \cite{coco,MasmoudiLSR,BGP}, and on the other hand from homogenization of rough boundaries
\cite{dadgv,dgvmasmoudi}.

\section{Weak solutions ``\`a la Leray"}
\label{NavierLeray}

In this section we start by use the change of variables introduced by Serre in \cite{Serre} in order to fix the fluid domain.
It is fair to point out that this change of variable is here particularly simple as there is no exterior boundary.
Then we establish the existence of an appropriate notion of weak solutions ``\`a la Leray" of this system.
We will observe that  the solid velocity benefits from extra regularity with respect to what is expected from the energy estimate.
We will also discuss the infinite inertia limit.

\subsection{A change of variables} \label{changeNS}

In order to write the equations of the fluid in a fixed domain, we are going to use the following changes of variables:
\begin{equation*}
\ell(t) :=   Q(t)^T \,   h' (t)  ,  \, u(t,x) := Q(t)^T \,  U(t, Q(t) x+h(t)) ,
\ p(t,x) := P(t, Q(t) x+ h(t))  ,
\end{equation*}
and we introduce
\begin{equation*}
 \sigma := -p \Id_3 + 2 \nu D(u) , \text{ where  }  D(u) := ( \frac{1}{2} ( \partial_{j} u_{i} +  \partial_{i} u_{j} ) )_{i,j} .
\end{equation*}
Therefore the system \eqref{NS1}-\eqref{Solideci}  now reads
\begin{gather}
\label{chNS1}
\frac{\partial u}{\partial t}+ (  u - u_\mathcal{S} ) \cdot \nabla u +  r \wedge u  + \nabla p =  \nu \Delta u \ \text{ for } \ x \in \mathcal{F}_0 , \\
\label{chNS2}
\div u = 0 \ \text{ for } \  x \in \mathcal{F}_0  ,  \\
\label{chNS3}
u  \cdot n = u_\mathcal{S} \cdot n  \ \text{ for } \ x\in \partial \mathcal{S}_0,  \\
\label{chNS4}
(D(u) n )  \wedge n = - \alpha (u - u_\mathcal{S}) \wedge n   \ \text{ for } \ x\in \partial \mathcal{S}_0 ,  \\
\label{chSolide1}
m  \ell'  =  - \int_{ \partial \mathcal{S}_0 } \sigma n \, ds  + m \ell \wedge  r,  \\
\label{chSolide2}
\mathcal{J}_0 r'  =   - \int_{ \partial   \mathcal{S}_0}   x \wedge \sigma n \, ds  + ( \mathcal{J}_0  r) \wedge r , \\
\label{chNSci2}
u |_{t= 0} = u_0 , \\
\label{chSolideci}
h (0)= 0  , \ h' (0)= \ell_0 ,\ r  (0)=  r_0,
\end{gather}
with
\begin{equation}
\label{Svelo}
  u_{\mathcal{S}} (t,x) := \ell  (t)  + r(t) \wedge x .
   \end{equation}

\subsection{A weak formulation} \label{weaksolution}

With of the purpose of writing a weak formulation of the system \eqref{chNS1}-\eqref{chSolideci} we introduce the following space
\begin{equation*}
 \mathcal{H} := \{ \phi \in L^{2} (\R^{3}) / \ \div \phi = 0  \text{ in }  \R^{3} \text{ and } D(\phi) = 0  \text{ in }  \mathcal{S}_0 \} .
\end{equation*}
According to Lemma $1.1$ in \cite{Temam}, p.18,  for all  $\phi \in  \mathcal{H}$, there exist $\ell_{\phi} \in \R^{3}$ and $r_{\phi} \in \R^{3} $ such that for any $x \in  \mathcal{S}_0$,
$\phi (x) = \ell_{\phi} + r_{\phi} \wedge  x $.
Therefore we extend the initial data $u_{0}$  by setting
$u_{0} := \ell_{0} + r_{0} \wedge x $  for $x \in \mathcal{S}_0 $.

Conversely, when $\phi \in  \mathcal{H}$, we denote  $\phi_{\mathcal{S}}$ its restriction to $\mathcal{S}_0 $.
\par
\  \par

Let us give here a result which will be useful in the sequel.
\begin{Lemma}
\label{Useful}
For any  $u,v \in \mathcal{H}$ with $u\vert_{\mathcal{F}_0} \in H^2$ and $v\vert_{\mathcal{F}_0} \in H^1$,
\begin{eqnarray*}
 \int_{ \mathcal{F}_0}  \Delta u \cdot v &=& - 2 \int_{ \mathcal{F}_0} D(u) :  D(v)
 + 2 \ell_{v} \cdot \int_{ \partial \mathcal{S}_0 } D(u) n \, ds    + 2 r_{v}  \cdot  \int_{ \partial \mathcal{S}_0 }  x \wedge D(u) n \, ds
  \\ &&+ 2 \int_{ \partial \mathcal{S}_0}    \Big( (D(u)  n)  \wedge n \Big)  \cdot \Big(  (v-v_\mathcal{S} ) \wedge  n \Big) .
\end{eqnarray*}
\end{Lemma}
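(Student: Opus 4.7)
The plan is to view this identity as a straightforward integration by parts, built on three standard ingredients: the identity $\Delta u = 2\operatorname{div} D(u)$ for divergence-free $u$, the symmetry of $D(u)$, and the decomposition of $v$ on $\partial \mathcal{S}_0$ into its rigid part $v_{\mathcal{S}} = \ell_v + r_v \wedge x$ and its tangential-to-solid remainder $v - v_{\mathcal{S}}$.

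First, since $\operatorname{div} u = 0$, I would write $\Delta u = 2 \operatorname{div} D(u)$ componentwise, and apply Green's formula on $\mathcal{F}_0$ (the boundary contribution at infinity vanishes thanks to $u \in L^2$ together with $u|_{\mathcal{F}_0} \in H^2$ and $v|_{\mathcal{F}_0} \in H^1$). With the sign convention that $n$ denotes the unit normal on $\partial \mathcal{S}_0$ pointing outward from $\mathcal{S}_0$ (so inward for $\mathcal{F}_0$, which gives the boundary term a plus sign after switching orientation), this yields
\begin{equation*}
\int_{\mathcal{F}_0} \Delta u \cdot v = -2 \int_{\mathcal{F}_0} D(u) : \nabla v + 2 \int_{\partial \mathcal{S}_0} (D(u) n) \cdot v \, ds.
\end{equation*}
Because $D(u)$ is symmetric, $D(u) : \nabla v = D(u) : D(v)$, which produces the first term on the right-hand side of the claim.

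Next I would split $v = v_{\mathcal{S}} + (v - v_{\mathcal{S}})$ on $\partial \mathcal{S}_0$. For the rigid part, substituting $v_{\mathcal{S}} = \ell_v + r_v \wedge x$ and using the triple product identity $a \cdot (b \wedge c) = b \cdot (c \wedge a)$ gives
\begin{equation*}
\int_{\partial \mathcal{S}_0} (D(u) n) \cdot v_{\mathcal{S}} \, ds = \ell_v \cdot \int_{\partial \mathcal{S}_0} D(u) n \, ds + r_v \cdot \int_{\partial \mathcal{S}_0} x \wedge D(u) n \, ds,
\end{equation*}
producing the two middle terms. For the remainder, the key observation is that $v \in \mathcal{H}$ together with $D(v) = 0$ inside $\mathcal{S}_0$ and $\operatorname{div} v = 0$ across $\partial \mathcal{S}_0$ forces $v \cdot n = v_{\mathcal{S}} \cdot n$ on $\partial \mathcal{S}_0$, hence $(v - v_{\mathcal{S}}) \cdot n = 0$. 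Then the elementary vector identity $(A \wedge n) \cdot (B \wedge n) = A \cdot B - (A \cdot n)(B \cdot n)$ applied with $A = D(u) n$ and $B = v - v_{\mathcal{S}}$ gives exactly
\begin{equation*}
\bigl((D(u) n) \wedge n\bigr) \cdot \bigl((v - v_{\mathcal{S}}) \wedge n\bigr) = (D(u) n) \cdot (v - v_{\mathcal{S}}),
\end{equation*}
yielding the last term.

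The only really delicate point is justifying $(v - v_{\mathcal{S}}) \cdot n = 0$ on $\partial \mathcal{S}_0$ at the level of traces: this requires reading the membership $v \in \mathcal{H}$ as providing a global divergence-free extension, so that the normal trace from the fluid side matches the normal trace of the rigid velocity from the solid side. Beyond that, everything is a matter of bookkeeping signs (the orientation of $n$) and assembling the pieces; no analytic subtlety is expected.
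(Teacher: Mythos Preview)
Your argument is correct and follows essentially the same route as the paper: integrate by parts via $\Delta u = 2\operatorname{div} D(u)$, use the symmetry of $D(u)$, and exploit $(v-v_{\mathcal S})\cdot n = 0$ on $\partial\mathcal S_0$ together with the identity $(A\wedge n)\cdot(B\wedge n)=A\cdot B-(A\cdot n)(B\cdot n)$. The only cosmetic difference is that the paper first splits $D(u)n$ into its normal and tangential parts and then recombines, whereas you split $v=v_{\mathcal S}+(v-v_{\mathcal S})$ directly, which is slightly more economical.
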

\begin{proof}
We have
\begin{equation*}
 \int_{ \mathcal{F}_0}  \Delta u \cdot v =  2 \int_{ \partial \mathcal{S}_0}  (D(u)  v) \cdot  n - 2 \int_{ \mathcal{F}_0} D(u) :  D(v)
 =  2 \int_{ \partial \mathcal{S}_0}  (D(u)  n) \cdot v - 2 \int_{ \mathcal{F}_0} D(u) :  D(v) ,
\end{equation*}
since $D(u) $ is symmetric.
Moreover
\begin{equation*}
 \int_{ \partial  \mathcal{S}_0}   (D(u)  n) \cdot v
 =   \int_{ \partial  \mathcal{S}_0}  \Big( (D(u)  n)  \cdot n \Big) \Big(  v \cdot  n \Big)
+ \int_{ \partial  \mathcal{S}_0} \Big( (D(u)  n)  \wedge n \Big)  \cdot \Big(  v \wedge  n \Big) .
  \end{equation*}
But
\begin{eqnarray*}
 \int_{ \partial  \mathcal{S}_0}  \Big( (D(u)  n)  \cdot n \Big) \Big(  v \cdot  n \Big)
 &=&  \int_{ \partial  \mathcal{S}_0}  \Big( (D(u)  n)  \cdot n \Big) \Big(  v_\mathcal{S}  \cdot  n \Big)
\\ &=&  \int_{ \partial  \mathcal{S}_0}   \Big( \big( (D(u)  n)  \cdot n \big)  n \Big)   \cdot v_\mathcal{S}
\\ &=&  \int_{ \partial  \mathcal{S}_0}   \Big( D(u)  n -  (D(u)  n)_{tan}   \Big)   \cdot v_\mathcal{S}
\\ &=&   \ell_{v} \cdot \int_{ \partial \mathcal{S}_0 } D(u)  n \, ds    + r_{v}  \cdot  \int_{ \partial \mathcal{S}_0 }  x \wedge D(u)  n \, ds
-  \int_{ \partial  \mathcal{S}_0}  (D(u)  n)_{tan} \cdot v_\mathcal{S}
\\ &=&   \ell_{v} \cdot \int_{ \partial \mathcal{S}_0 } D(u)  n \, ds    + r_{v}  \cdot  \int_{ \partial \mathcal{S}_0 }  x \wedge D(u)  n \, ds
-  \int_{ \partial  \mathcal{S}_0}  \Big(D(u)  n \wedge  n \Big)\cdot \Big(v_\mathcal{S}\wedge  n \Big) .
\end{eqnarray*}
Above we use the index ``tan" to denote the tangential part of a vector field defined on $\partial \mathcal{S}_0$.
 Gathering the previous identities yields the result.
\end{proof}

Now we endow the space $L^{2} (\R^{3})$ with the following inner product, which is equivalent to the usual one,
\begin{equation*}
(\phi  , \psi  )_{\mathcal{H}} :=    \int_{\mathcal{F}_0 }  \phi   \cdot   \psi  \,  dx   +  \int_{\mathcal{S}_0 }  \rho_{S_{0}}  \phi   \cdot   \psi  \, dx .
\end{equation*}
When $\phi  , \psi $  are in  $\mathcal{H}$ we obtain:
\begin{equation}
\label{plusimple}
(\phi  , \psi  )_{\mathcal{H}} =    \int_{\mathcal{F}_0 }  \phi \cdot  \psi  \, dx  +  m  \ell_{\phi} \cdot  \ell_{\psi} + \mathcal{J}_0  r_{\phi}   \cdot   r_{\psi}  ,
\end{equation}
by using \eqref{EqMasse}-\eqref{Eq:CG}-\eqref{eqJ}. The spaces $L^{2} (\R^{3})$ and $ \mathcal{H} $ are clearly Hilbert spaces for the scalar product $(\cdot,\cdot)_{\mathcal{H}}$.

\begin{Proposition}
\label{WeakStrong}
A smooth solution $u$ of \eqref{chNS1}-\eqref{chSolideci} satisfies the following: for any $v \in C^{\infty } ([0,T ] ;  \mathcal{H} )$ such that $v |_{\overline{\mathcal{F}_0}} \in C^{\infty } ([0,T ] ;  C^{\infty }_{c} (\overline{\mathcal{F}_0} ))$,
for all $t\in [0,T ] $,
\begin{equation}
\label{WeakNS}
(u,v)_{\mathcal{H}}  (t) -  (u_{0},v |_{t=0})_{\mathcal{H}}   =
 \int_{0}^{t} \Big[ (u, \partial_{t} v)_{\mathcal{H}}
 + 2  \nu a(u,v) +  b(u,u,v)    \Big] ,
\end{equation}
where
\begin{eqnarray*}
a(u,v) &:=& - \alpha  \int_{\partial \mathcal{S}_0} (u - u_\mathcal{S} ) \cdot (v - v_\mathcal{S} ) - \int_{\mathcal{F}_0 }     D(u) :  D(v)
\\ b(u,v,w) &:=& m  \det (r_u ,\ell_{v} ,  \ell_w ) + \det (\mathcal{J}_0 r_u , r_{v} , r_w ) +
\int_{\mathcal{F}_0 } \Big(     [   (u-  u_\mathcal{S} )  \cdot\nabla w ] \cdot v   -  \det (r_u , v ,  w ) \Big).
\end{eqnarray*}
Moreover this solution $u$ satisfies the following energy equality: for almost any  $t \in [0,T ]$,
\begin{equation}
\label{NSBodyWeakEnergyEqu}
\frac{1}{2} \| u (t, \cdot)  \|^{2}_{\mathcal{H}} + 2 \nu \int_{(0,t) \times \mathcal{F}_0   } | D(u) |^{2}   + 2\alpha\nu \int_0^t \int_{ \partial \mathcal{S}_0} |u- u_\mathcal{S}|^2
= \frac{1}{2}   \| u_{0}  \|^{2}_{\mathcal{H}}     .
\end{equation}
\end{Proposition}
In the sequel we will consider several asymptotics with respect to the parameters $\alpha, \nu, m$ and $ \mathcal{J}_0  $.
Let us therefore stress here that the forms $a$ and $b$ above depend respectively on  $\alpha ,  \mathcal{S}_0  $ and on $m,\mathcal{J}_0 , \mathcal{S}_0 $.
\begin{proof}
Let  $u$ be a smooth solution of \eqref{chNS1}-\eqref{chSolideci} on $[0,T ] $ and $v \in C^{\infty } ([0,T ] ;  \mathcal{H} )$ such that $v |_{\overline{\mathcal{F}_0}} \in C^{\infty } ([0,T ] ;  C^{\infty }_{c} (\overline{\mathcal{F}_0} ))$.
We first  observe  that the result of Proposition \ref{WeakStrong} will follow, by an integration by parts in time, from the following claim: for any $t \in [0,T]$,
\begin{equation}
\label{WeakNSDos}
  (\partial_{t} u, v)_{\mathcal{H}}
   =  2\nu a(u,v)  + b(u,u,v)   .
\end{equation}
To prove the claim,  we multiply  equation \eqref{chNS1} by $v$ and integrate over $\mathcal{F}_0$:
\begin{equation*}
 \int_{ \mathcal{F}_0}  \frac{\partial u}{\partial t} \cdot v +  \int_{ \mathcal{F}_0}  [ \big( (u- u_\mathcal{S}    ) \cdot\nabla \big)u  ]\cdot v
 +  \int_{ \mathcal{F}_0}  ( r(t) \wedge u  )  \cdot v   +  \int_{ \mathcal{F}_0}  \nabla p  \cdot v
    =  \int_{ \mathcal{F}_0} \nu \Delta u \cdot v    .
\end{equation*}
We then use some integrations by parts, taking into account \eqref{chNS2} and \eqref{chNS3}, to get
\begin{eqnarray*}
  \int_{ \mathcal{F}_0} [ \big( (u- u_\mathcal{S}    ) \cdot\nabla \big)u  ]\cdot v  &=& - \int_{ \mathcal{F}_0} u \cdot  \big( (u-  u_\mathcal{S}  ) \cdot\nabla v \big) ,
 \\  \int_{ \mathcal{F}_0}   ( r(t) \wedge u  )  \cdot v &=&   \int_{ \mathcal{F}_0}  \det (r,u,v)   ,
 \\  \int_{ \mathcal{F}_0}  \nabla p  \cdot v &=&   \int_{ \partial  \mathcal{S}_0}  p  n \cdot v  .
\end{eqnarray*}
Next, we observe that
\begin{equation*}
 \int_{ \partial  \mathcal{S}_0}  p  n \cdot v
 = \ell_{v} \cdot \int_{ \partial \mathcal{S}_0 } p n \, ds   +  r_{v}  \cdot  \int_{ \partial \mathcal{S}_0 }  x \wedge p n \, ds .
  \end{equation*}
   Therefore, using Lemma \ref{Useful}, the Navier conditions and \eqref{chSolide1}-\eqref{chSolide2}, we obtain
\begin{eqnarray*}
\int_{ \partial  \mathcal{S}_0}  p  n \cdot v -\int_{ \mathcal{F}_0} \nu \Delta u \cdot v
   &=& - \ell_{v} \cdot \int_{ \partial \mathcal{S}_0 } \sigma n \, ds    - r_{v}  \cdot  \int_{ \partial \mathcal{S}_0 }  x \wedge \sigma n \, ds
  \\ && + 2 \alpha \nu\int_{ \partial  \mathcal{S}_0}  (u - u_\mathcal{S} )   \cdot   (v-v_\mathcal{S} )  + 2 \nu\int_{ \mathcal{F}_0} D(u) :  D(v)
\\ &=& m  \ell_{v}  \cdot   \ell'   + \mathcal{J}_0    r_{v}  \cdot r'  -  \det (m\ell,r, \ell_{v})  -  \det (  \mathcal{J}_0  r ,r,    r_{v})
\\ && + 2 \alpha \nu\int_{ \partial  \mathcal{S}_0}  (u - u_\mathcal{S} )   \cdot   (v-v_\mathcal{S} )  + 2 \nu \int_{ \mathcal{F}_0} D(u) :  D(v) .
\end{eqnarray*}
Gathering all these equalities yields \eqref{WeakNSDos}.
Finally the  energy equality \eqref{NSBodyWeakEnergyEqu} follows from \eqref{WeakNS} by specifying the test function as $v=u$.
\end{proof}
Let us recall that according to Korn's inequality, see for instance \cite{Mclean}[Th. 10.2], the  energy equality \eqref{NSBodyWeakEnergyEqu} yields that $u \in L^2 ( 0,T  ;  \underline{\mathcal{V}}  )$, where $ \underline{\mathcal{V}} $ is given by
\[\underline{\mathcal{V}}:=   \{  \phi \in   \mathcal{H} / \ \int_{ \mathcal{F}_0 } | \nabla \phi  (y) |^2  dy < + \infty    \} \ \text{  with  norm } \ \ \|  \phi  \|_{\underline{\mathcal{V}}} := \|  \phi  \|_\mathcal{H} + \|  \nabla  \phi  \|_{L^{2} (\mathcal{F}_0 ,dy )   }. \]

We introduce now the concept of weak solutions ``\`a la Leray".
\begin{Definition}[]
\label{Weak}
We say that
\begin{equation*}
u \in C_w ( [0,T ]  ; \mathcal{H} ) \cap L^2 ( 0,T  ;  \underline{\mathcal{V}}  )
\end{equation*}
is a weak solution of the system \eqref{chNS1}-\eqref{chSolideci}
if for all $v \in C^{\infty } ([0,T ] ;  \mathcal{H} )$ such that $v |_{\overline{\mathcal{F}_0}} \in C^{\infty } ([0,T ] ;  C^{\infty }_{c} (\overline{\mathcal{F}_0} ))$
and for all $t\in [0,T ] $, \eqref{WeakNS} holds true.
\end{Definition}
Let us remark that a standard density argument allows us to take less smooth test vector
fields $v$ in the above weak formulation.  More precisely, to enlarge the space of the test functions, we introduce the space
\begin{equation*}
\mathcal{V} :=   \{  \phi \in   \mathcal{H} / \ \int_{ \mathcal{F}_0 } | \nabla \phi  (y) |^2 (1 + | y |^2 ) dy < + \infty    \} ,
\end{equation*}
endowed with the norm
\begin{equation*}
 \|  \phi  \|_\mathcal{V} := \|  \phi  \|_\mathcal{H} + \|  \nabla  \phi  \|_{L^{2} (\mathcal{F}_0  ,(1 + | y |^2 )^{\frac{1}{2}} dy ) }  .
\end{equation*}
It is worth to notice from now on that $b$ is a  trilinear  continuous form on $\underline{\mathcal{V}} \times \underline{\mathcal{V}} \times \mathcal{V}$: there exists a constant $C>0$ such that
for any $(u,v,w) \in \underline{\mathcal{V}} \times  \underline{\mathcal{V}} \times \mathcal{V}$,
\begin{equation}
\label{bconti}
|b(u,v,w)  |  \leqslant C \| u  \|_{\underline{\mathcal{V}}} \,  \| v  \|_{\underline{\mathcal{V}}} \,  \| w \|_{\mathcal{V}} .
\end{equation}
This follows easily from Holder's inequality and the following interpolation inequality
\begin{equation}
\label{interpo}
\|  v \|_{L^4 (\mathcal{F}_{0})} \leq \sqrt{2}  \|  v \|^{\frac{1}{4}}_{L^2 (\mathcal{F}_{0})} \|  \nabla v \|^{\frac{3}{4}}_{L^2 (\mathcal{F}_{0})} .
\end{equation}
Observe in particular that the weight in the definition of  $\mathcal{V}$ allows to handle the rotation part of $u_\mathcal{S}$.

Moreover the trilinear form $b$ satisfies the following crucial property
\begin{equation}
\label{Tonga}
(u,v) \in \underline{\mathcal{V}} \times \mathcal{V} \text{ implies } b(u,v,v) = 0 .
\end{equation}

On the other hand, for any $u,v$ in $\underline{\mathcal{V}} $,
\begin{equation}
\label{Conta}
| a(u,v)  |  \leq  C   \|  u \|_{\underline{\mathcal{V}}} \,    \|  v \|_{\underline{\mathcal{V}}}   .
\end{equation}
In fact, to deal with the boundary integral, we introduce a smooth cut-off function $\chi$ defined on $\overline{\mathcal{F}_0}$ such that $\chi = 1$ in $\Gamma_{c}$ and  $\chi = 0$ in $\mathcal{F}_0 \setminus \Gamma_{2 c}$, where
\begin{equation*}
\Gamma_{c} := \{  x \in  \mathcal{F}_0 / \  d(x)  < c  \} \text{ with }  d(x) := dist (x,  \partial   \mathcal{S}_0 ) .
\end{equation*}
Let us denote
\begin{equation*}
    \psi^u_{\mathcal{S}} (t,x) :=  \frac12 (  \ell_u (t) \wedge  x  - r_u  (t)  | x |^{2} )   \text{ and } \tilde{u}_{\mathcal{S}} := \curl ( \chi  \psi^u_{\mathcal{S}} ) ,
\end{equation*}
and let us define similarly $\tilde{v}_{\mathcal{S}}$.

Thus, $\tilde{u}_{\mathcal{S}}$ and $\tilde{v}_{\mathcal{S}}$ are equal respectively, to $u_{\mathcal{S}}$ and ${v}_{\mathcal{S}}$ near $ \partial \mathcal{S}_0$, vanish away $ \mathcal{S}_0 $, and are divergence free. Moreover, $ \|\tilde{u}_{\mathcal{S}}\|_{H^1(\mathcal{F}_0)} \leq C (\|\ell_u\| + \|r_u\|)$, and a similar estimate holds for $\tilde{v}_{\mathcal{S}}$.  Then, we apply the H\"older inequality and the trace theorem, to arrive at
\begin{align*} \Bigl|\int_{\partial \mathcal{S}_0} (u - u_\mathcal{S} ) \cdot (v - v_\mathcal{S} )\Bigr| & = \Bigl| \int_{\partial \mathcal{S}_0} (u - \tilde{u}_\mathcal{S} ) \cdot (v - \tilde{v}_\mathcal{S} )\Bigr| \\
& \leq C \|u - \tilde{u}_\mathcal{S}\|_{H^1(\mathcal{F}_0)}   \|v - \tilde{v}_\mathcal{S}\|_{H^1(\mathcal{F}_0)} \\
& \leq C (  \| u\|_{H^1(\mathcal{F}_0)} + \|\ell_u\| + \|r_u\| )  (  \| v\|_{H^1(\mathcal{F}_0)} + \|\ell_v\| + \|r_v\| ),
\end{align*}
so that, \eqref{Conta} follows.

These previous arguments allows us to take less smooth test vector
fields $v$ in the weak formulation \eqref{WeakNS}, for instance, belonging to $ H^1 ( 0,T   ; \mathcal{H} ) \cap  L^4 (0,T  ;  \mathcal{V} )$.

Finally let us mention that to a weak solution we may associate a pressure such that the equations are satisfied in the distribution sense, and prove that a regular weak solution is a solution in the classical sense; following for instance \cite{Serre}, Section $III$ and \cite{TemamNS} with a few straightforward adaptations.

\subsection{An extension of Leray's theorem}
\label{Leray}

The following result establishes the existence of global weak solutions of the system \eqref{chNS1}-\eqref{chSolideci}.
\begin{Theorem}
\label{NSBodyWeak}
Let be given $u_{0} \in  \mathcal{H}$  and $T >0$.
Then there exists a weak solution $u$ of \eqref{chNS1}-\eqref{chSolideci} in $  C_w ( [0,T ]  ;  \mathcal{H}) \cap  L^2 ( 0,T  ; \underline{\mathcal{V}})$.
Moreover this solution satisfies the following energy inequality: for almost any  $t \in [0,T ]$,
\begin{equation}
\label{NSBodyWeakEnergy}
\frac{1}{2} \| u (t, \cdot)  \|^{2}_{\mathcal{H}} + 2 \nu \int_{(0,t) \times \mathcal{F}_0   } | D(u) |^{2}   + 2\alpha\nu \int_0^t \int_{ \partial \mathcal{S}_0} |u- u_\mathcal{S}|^2
\leq \frac{1}{2}   \| u_{0}  \|^{2}_{\mathcal{H}}     .
\end{equation}
\end{Theorem}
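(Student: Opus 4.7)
The plan is to construct the weak solution via a Galerkin scheme in the Hilbert space $\mathcal{H}$ and pass to the limit in \eqref{WeakNS}. First I would fix a Hilbert basis $(e_k)_{k \geq 1}$ of $\mathcal{H}$ made of sufficiently regular elements—for instance eigenfunctions of a self-adjoint coercive operator associated with the symmetric form $-a$ on $\underline{\mathcal{V}}$, that is, a Stokes-type operator adapted to the rigidity constraint $D(\phi) = 0$ on $\mathcal{S}_0$ and to the Navier condition on $\partial \mathcal{S}_0$. It will be important that each $e_k$ belongs to $\mathcal{V}$ (not merely $\underline{\mathcal{V}}$) so that the cancellation \eqref{Tonga} can be applied to the approximants. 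Projecting \eqref{WeakNS} onto $V_n := \mathrm{span}(e_1,\dots,e_n)$ produces an ODE system for the coefficients of $u^n(t) = \sum_{k=1}^n c^n_k(t)\, e_k$, locally solvable by Cauchy-Lipschitz since $b$ and $a$ are continuous on $V_n$.

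Next I would derive the uniform energy estimate. Testing the Galerkin equation against $u^n$ itself, the crucial property $b(u^n,u^n,u^n) = 0$ in \eqref{Tonga} kills the nonlinear contribution while
$$2\nu\, a(u^n,u^n) = -2\alpha\nu \int_{\partial \mathcal{S}_0} |u^n - u^n_\mathcal{S}|^2 - 2\nu \int_{\mathcal{F}_0} |D(u^n)|^2$$
furnishes the dissipation, yielding \eqref{NSBodyWeakEnergy} at the Galerkin level. This gives global existence of $u^n$ together with uniform bounds in $L^\infty(0,T;\mathcal{H}) \cap L^2(0,T;\underline{\mathcal{V}})$; in particular $\ell_{u^n}$ and $r_{u^n}$ are uniformly bounded in $L^\infty(0,T)$.

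For compactness I would use the continuity estimates \eqref{bconti} and \eqref{Conta}, combined with the interpolation inequality \eqref{interpo}, to bound $\partial_t u^n$ uniformly in $L^{4/3}(0,T;\mathcal{V}')$. An Aubin-Lions argument, localized in space because $\mathcal{F}_0$ is unbounded, then delivers strong convergence of a subsequence of $u^n$ in $L^2(0,T;L^2_{\mathrm{loc}}(\R^3))$. This suffices to pass to the limit in each piece of $b(u^n,u^n,v)$: the fluid integral only sees the compact support of $v$, and the translation/rotation terms converge thanks to the strong convergence of $\ell_{u^n}, r_{u^n}$ in $L^2(0,T)$ (these components live in a fixed finite-dimensional space). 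The linear terms in $a$ pass to the limit by weak convergence in $L^2(0,T;\underline{\mathcal{V}})$, and the time term by integration by parts. Weak continuity $u \in C_w([0,T];\mathcal{H})$ is obtained by Arzelà-Ascoli applied to each $t \mapsto (u(t),e_k)_{\mathcal{H}}$, together with the uniform $\mathcal{H}$-bound and a density argument. The energy inequality \eqref{NSBodyWeakEnergy} is preserved by weak lower semicontinuity of the $L^2$-norms in space and time.

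The main obstacle is the unboundedness of $\mathcal{F}_0$, which forces one to work in a weighted space $\mathcal{V}$ to control the rotational contribution $r \wedge x$ of $u_\mathcal{S}$ at infinity, and to localize the Aubin-Lions compactness argument (the nonlinear term itself is harmless against compactly supported test fields, so no uniform tail estimate on $u^n$ is actually required). A secondary technical point is the treatment of the boundary friction term in $a$: the computation sketched after \eqref{Conta}, using the lifting $\tilde u_\mathcal{S} = \mathrm{curl}(\chi \psi^u_\mathcal{S})$, must be reproduced at the level of the approximants and shown to be stable when passing to the limit via trace continuity together with the strong $L^2_{\mathrm{loc}}$ convergence.
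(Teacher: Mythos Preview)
Your scheme has a real gap at the compactness step, stemming from the unbounded rotational drift $r\wedge x$ in $u_\mathcal{S}$.

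First, a minor point: the eigenfunction basis you propose does not exist. The embedding $\underline{\mathcal{V}}\hookrightarrow\mathcal{H}$ is not compact since $\mathcal{F}_0$ is an exterior domain, so the Stokes-type operator associated with $-a$ has no discrete spectrum. This is fixable by taking instead a generic Hilbert basis drawn from a dense set of smooth compactly supported fields (the paper uses $\mathcal{Y}$), which also guarantees each $e_k\in\mathcal{V}$.

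The serious issue is your bound on $\partial_t u^n$ in $L^{4/3}(0,T;\mathcal{V}')$. The Galerkin relation only gives $(\partial_t u^n,v)_\mathcal{H}$ for $v\in V_n$; for arbitrary $v\in\mathcal{V}$ you must replace $v$ by its $\mathcal{H}$-orthogonal projection $P_n v$ and then estimate $b(u^n,u^n,P_n v)$ via \eqref{bconti}. This requires $\|P_n v\|_\mathcal{V}\le C\|v\|_\mathcal{V}$ uniformly in $n$, and there is no reason for this to hold: $P_n$ is an $\mathcal{H}$-projection (at best an $\underline{\mathcal{V}}$-projection, had eigenfunctions been available), and neither structure respects the weight $(1+|y|^2)^{1/2}$ defining $\mathcal{V}$. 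Equivalently, if you switch to Temam's Fourier-in-time method, the pairing $\langle\hat f_n,\hat u^n\rangle$ forces you to control $\int_0^T\|u^n\|_\mathcal{V}^2\,dt$, whereas the energy inequality only bounds $\int_0^T\|u^n\|_{\underline{\mathcal{V}}}^2\,dt$.

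The paper resolves this by an additional truncation layer that your one-shot Galerkin scheme is missing: it replaces $r\wedge x$ by $r\wedge\chi_R(x)$ with $\chi_R$ bounded, so that the truncated form $b_R$ is continuous on $\underline{\mathcal{V}}\times\underline{\mathcal{V}}\times\underline{\mathcal{V}}$ (no weight on the third slot). The Galerkin compactness for the truncated problem then only needs the $\underline{\mathcal{V}}$-bounds that the energy estimate actually delivers. Only afterwards, once a genuine weak solution $u_R$ of the truncated problem is in hand (satisfying the formulation against \emph{all} test fields in $\mathcal{V}$, with no projection involved), does the paper bound $\partial_t u_R$ in $L^{4/3}(0,T;\mathcal{V}')$ via \eqref{bcontiRu4} and let $R\to\infty$ by Aubin--Lions. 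Collapsing these two limits into one, as you do, removes precisely the mechanism that makes the time-derivative estimate go through.
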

Theorem \ref{NSBodyWeak} is the counterpart of  Theorem 4.5 of \cite{Serre} for the Navier conditions instead of the no-slip conditions.

\begin{proof}
We will proceed in several steps.
In particular because the space of test functions $\mathcal{V}$ involves a weight which makes it smaller than the space $\underline{\mathcal{V}}$ involved by the energy estimates, we will first introduce a truncation of the solid velocity far from the solid. This strategy was already used in \cite{Ortega2} in a slightly different context. \par \ \par

\textbf{Truncation.}
Let $R_0 > 0$ be such that $\mathcal{S}_0 \subset B(0, \frac{R_0}{2})$.
For $R > R_0 $, let $\chi_R : \R^3 \rightarrow \R^3 $ be a smooth vector field such that $ r \wedge  \chi_R $ is divergence free, $\chi_R  (x) = x$ for $x \in  B(0,R)$ and satisfying
$\|  \chi_R \|_{L^\infty ( \R^3 ;  \R^3)} \leq R$.
Indeed one may define for example  $\chi_R$ by the formula $\chi_R  (x) =  \frac{R}{| x |} x$ for $x \in  \R^{3} \setminus B(0,R)$.

Observe in particular that for any $r\in \R^{3}$, for any $w \in \mathcal{V}$,
\begin{equation}
\label{cvd}
(r\wedge \chi_R  ) \cdot \nabla w  \rightarrow (r\wedge x) \cdot \nabla w  \text{ in } L^2 ( \R^3 ) , \text{ when } R \rightarrow + \infty ,
\end{equation}
by Lebesgue's dominated convergence theorem.

Then we truncate the solid velocity $ u_{\mathcal{S}}$ defined in \eqref{Svelo} by
$  u_{\mathcal{S},R} (t,x) := \ell  (t)  + r(t) \wedge\chi_R (x)  $,
and we introduce the form
\begin{equation*}
 b_R (u,v,w) := m  \det (r_u ,\ell_{v} ,  \ell_w ) + \det (\mathcal{J}_0 r_u , r_{v} , r_w ) +
\int_{\mathcal{F}_0 } \Big(     [   (u-  u_{\mathcal{S},R} )  \cdot\nabla w ] \cdot v   -  \det (r_u , v ,  w ) \Big).
\end{equation*}
The interest of such a truncation  $b_R$ of $b$ is that
it is now well-defined and trilinear on $ \underline{\mathcal{V}} \times  \underline{\mathcal{V}} \times  \underline{\mathcal{V}}$ (note that the third argument is here  taken not only in $\mathcal{V}$ but in the larger space $ \underline{\mathcal{V}}$) and continuous in the sense that
 there exists a constant $C>0$ such that
for any $(u,v,w) \in  \underline{\mathcal{V}} \times  \underline{\mathcal{V}} \times \underline{\mathcal{V}}$,
\begin{equation}
\label{bcontiR}
|b_{R} (u,v,w)  |  \leqslant C \| u  \|_{\underline{\mathcal{V}}} \,  \| v  \|_{\underline{\mathcal{V}}} \,  \| w \|_{\underline{\mathcal{V}}} .
\end{equation}
Of course the constant $C$ in \eqref{bcontiR} depends on $R$. However, when restricting $b_{R}$ to $ \underline{\mathcal{V}} \times  \underline{\mathcal{V}} \times {\mathcal{V}}$, there exists $C > 0$ such that for any $R > R_0 $, for any $(u,v,w) \in  \underline{\mathcal{V}} \times  \underline{\mathcal{V}} \times {\mathcal{V}}$,
\begin{equation}
\label{bcontiRu}
|b_{R} (u,v,w)  |  \leqslant C \| u  \|_{\underline{\mathcal{V}}} \,  \| v  \|_{\underline{\mathcal{V}}} \,  \| w \|_{\mathcal{V}} .
\end{equation}
We also have that  there exists $C  > 0$ such that for any $R > R_0 $, for any $(u,v) \in  \underline{\mathcal{V}}  \times {\mathcal{V}}$,
\begin{equation}
\label{bcontiRu4}
|b_{R} (u,u,v)  |  \leqslant C (  \| u  \|^{2}_{L^4(\mathcal{F}_{0})}   + \| u  \|^{2}_{\mathcal{H}}  ) \, \| v \|_{\mathcal{V}}   .
\end{equation}
Actually, estimates \eqref{bcontiRu} and \eqref{bcontiRu4} are proved by proceeding in the same way than for the proof of \eqref{bconti}.

 Moreover  the cancellation property \eqref{Tonga} is still correct:
\begin{equation}
\label{TongaR}
(u,v) \in \underline{\mathcal{V}} \times   \underline{\mathcal{V}} \text{ implies } b_R (u,v,v) = 0 .
\end{equation}

Finally we deduce from \eqref{cvd} that for any $(u,v,w) \in \underline{\mathcal{V}} \times  \underline{\mathcal{V}} \times {\mathcal{V}}$,
\begin{equation}
\label{cvdb}
b_{R} (u,v,w)   \rightarrow   b (u,v,w)  \text{ when } R \rightarrow + \infty .
\end{equation}
\par \ \par

\textbf{Existence for the truncated system.} Then, given $u_{0} \in  \mathcal{H}$  and $T >0$, there exists  $u_R $ in $  C_w ( [0,T ]  ;  \mathcal{H}) \cap  L^2 ( 0,T  ; \underline{\mathcal{V}})$ satisfying,  for any $v \in C^{\infty } ([0,T ] ;  \mathcal{H} )$ such that $v |_{\overline{\mathcal{F}_0}} \in C^{\infty } ([0,T ] ;  C^{\infty }_{c} (\overline{\mathcal{F}_0} ))$, and
for all $t\in [0,T ] $,
\begin{equation}
\label{WeakNSR}
(u_R ,v)_{\mathcal{H}}  (t) -  (u_{0},v |_{t=0})_{\mathcal{H}}   =
 \int_{0}^{t} \Big[ (u_R , \partial_{t} v)_{\mathcal{H}}
 + 2  \nu a(u_R ,v) +  b_R (u_R ,u_R ,v)    \Big] .
\end{equation}

Moreover $u_{R}$ verifies
for almost any  $t \in [0,T ]$ the energy inequality
\eqref{NSBodyWeakEnergy}.

This can be proved with very standard methods, by example considering some Faedo-Galerkin approximations and passing to the limit.
Let us therefore sketch a proof of it  referring for example  to \cite{TemamNS} where a comprehensive study of the Leray theorem for the classical case of a fixed boundary is treated.

Let  $(w_j )_{j \geq 1}$ be a Hilbert basis of  $\underline{\mathcal{V}} $.  For simplicity, since the set
\[ \mathcal{Y}:= \{ \phi \in C^\infty_c(\R^{3}) / \ \div \phi = 0  \text{ in }  \R^{3} \text{ and } D(\phi) = 0  \text{ in }  \mathcal{S}_0 \}
 \]
 is dense in $\underline{\mathcal{V}} $,  we take $ w_j \in \mathcal{Y}$, for all $j \geq 1$.

We define an approximate solution $u_{N} := u_{N,R}$ (in the sequel we will omit the dependence on $R$ for the sake of clarity) of the form
$u_{N} = \sum_{i=1}^{N} g_{iN} (t) w_{i}$
satisfying, for any $j=1,\ldots,N$,
\begin{eqnarray}
\label{Ga1}
(\partial_{t }u_{N} ,w_{j})_{\mathcal{H}}    =
 2  \nu a(u_{N} ,w_{j}) +  b_R (u_{N},u_{N},w_{j})    ,
 \\ \label{Ga2}
 u_{N} \vert_{t=0} = u_{N0} ,
\end{eqnarray}
where $ u_{N0}$ is the orthogonal projection in $\mathcal{H}$ of $u_{0}$ onto the space spanned by $w_{1} , \ldots, w_N$.
Let us explain why  such $(u_{N})_{N}$ do exist.
First we introduce the matrices:
\begin{eqnarray*}
\mathcal{M}_{N} := \begin{bmatrix}  ( w_{i} , w_{j} )_{\mathcal{H}} \end{bmatrix}_{1 \leq i,j \leq N} , \quad
\mathcal{G}_{N} := \begin{bmatrix} g_{1N} & \ldots & g_{NN}  \end{bmatrix} , \quad
\mathcal{A}_{N} := \begin{bmatrix}  a( w_{i} , w_{j} ) \end{bmatrix}_{1 \leq i,j \leq N} ,
\end{eqnarray*}
and
for any $u,v \in \R^{N}$, $ \mathcal{B}_{N} (u,v)  := (\mathcal{B}_{Nj} (u,v) )_{1 \leq j \leq N} , \text{ where }  \mathcal{B}_{Nj} (u,v) := \sum_{1 \leq i,k \leq N} u_{i} v_{k} b_R (w_{i} ,w_{k} , w_{j})$.
Then the equation \eqref{Ga1} can be recast as the following nonlinear differential system for the functions $( g_{iN} )_{1\leq i \leq N} $:
\begin{equation*}
\mathcal{G}_{N} '(t) =\mathcal{M}_{N}^{-1} \Big(  2  \nu  \mathcal{A}_{N} \mathcal{G}_{N} + \mathcal{B}_{N} ( \mathcal{G}_{N} , \mathcal{G}_{N})  \Big)
\end{equation*}
and the initial condition \eqref{Ga2} is equivalent to an initial condition of the form
$\mathcal{G}_{N} (0) = \mathcal{G}_{N,0} $.
According to the Cauchy-Lipschitz theorem this system has a maximal solution defined on some time interval $[0,T_{N} ]$ with $ T_{N} > 0$.
Moreover if  $ T_{N}  < T$ then $\|  u_{N} \|_{\mathcal{H}} $ must tend to $+\infty$ as $t \rightarrow T_{N}$.

The following energy estimate shows that this does not happen and therefore $T_{N} = T$.
For any $j=1,\ldots,N$, we multiply \eqref{Ga1} by $ g_{jN} (t)$ and we sum the resulting identities to obtain, thanks to \eqref{TongaR},
$\frac12 \partial_{t }  \|  u_{N} \|^2_{\mathcal{H}}    =
 2  \nu a(u_{N} , u_{N} )    $,
so that, by integration in time, we have
\begin{equation*}
%\label{NSBodyWeakEnergyGa}
\frac{1}{2} \| u_{N} (t, \cdot)  \|^{2}_{\mathcal{H}} + 2 \nu \int_{(0,t) \times \R^{3} } | D(u_{N}) |^{2}   + 2\alpha \nu\int_0^t \int_{ \partial \mathcal{S}_0} |u_{N}- u_{N,\mathcal{S}}|^2
\leq \frac{1}{2}   \| u_{N0}  \|^{2}_{\mathcal{H}}   \leq \frac{1}{2}   \| u_{0}  \|^{2}_{\mathcal{H}}   .
\end{equation*}
In particular, by the Korn inequality, the sequence $(u_{N}  )_{N}$ is bounded in  $L^{\infty} (0,T ; \mathcal{H}) \cap L^{2} (0,T ;  \underline{\mathcal{V}})$.
Therefore, there exists a subsequence of $(u_{N}  )_{N}$, relabelled the same, converging  weakly-* in $  L^\infty ( 0,T   ;  \mathcal{H}) $ and  weakly in $ L^2 ( 0,T  ;\underline{\mathcal{V}} )$ to $u \in  L^\infty ( 0,T   ;  \mathcal{H}) \cap L^2 ( 0,T  ; \underline{\mathcal{V}})$, as $ N \rightarrow + \infty$, which satisfies,  for almost any $t \in [0,T ]$, the energy inequality \eqref{NSBodyWeakEnergy}.

In order to pass to the limit in the nonlinear term in \eqref{Ga1}, we need a strong convergence. We will closely follow the classical arguments in Ch. 3 of \cite{TemamNS}.
To this end, we are going to bound a fractional derivative in time of the  functions  $u_{N} $ by applying the Fourier transform.
We therefore first extend the functions  $u_{N} $ to the whole time line as follows.
For any $N>1$ let us now denote by $\tilde{u}_{N} $ the function defined from $\R$ to $ \mathcal{H}$ which is equal to $u_{N} $ on $[0,T ]$ and by $0$ outside.
We denote by $\hat{u}_{N} $ the Fourier transform of $\tilde{u}_{N} $, defined by
$\hat{u}_{N}  (\tau) := \int_{\R} e^{-2i\pi t \tau } \tilde{u}_{N}  (t) dt $.
 Similarly we extend the functions $g_{iN}$ by $0$ outside $[0,T ]$ and we denote by $\hat{g}_{iN}$ their respective Fourier transform.

 According to Th. $2.2$ in \cite{TemamNS} it is sufficient to prove that there exists $\gamma >0$ such that
$(| \tau |^{\gamma}  \hat{u}_{N}  (\tau)  )_{N} $ is bounded in $L^{2} (\R ; \mathcal{H})$ to deduce that the sequence  $({u_{N}} )_{N}$ is relatively compact in $L^{2} (0,T ; L^2_{loc} (\mathbb{R}^3)). $

Let us denote, for $t \in \R$, by $\tilde{f}_{N} (t)$ the linear form on $\underline{\mathcal{V}}$ defined by
\begin{align*}
<\tilde{f}_{N} (t), w>   := 2  \nu a(u_{N}(t) ,w) +  b_R (u_{N} (t) ,u_{N} (t),w)    \text{ for } t \in [0,T]   \text{ and }
\tilde{f}_N (t)   := 0 \text{ otherwise,}
\end{align*}
so that the equation \eqref{Ga1} becomes
\[(\partial_{t} \tilde{u}_{N} ,w_{j} )_{\mathcal{H}}    =  - ( u_{N0}  ,w_{j} )_{\mathcal{H}} \,  \delta_{0} (t)  +  (u_{N} \vert_{t=T}  ,w_{j} )_{\mathcal{H}} \,  \delta_{T}(t)   +   <\tilde{f}_{N} , w_{j}>  ,
\]
and then, taking the Fourier transform in time, we get  for any $\tau \in \R$,
\begin{equation*}
2\pi i \tau (\hat{u}_{N}  ,w_{j} )_{\mathcal{H}}    =  - ( u_{N0}  ,w_{j} )_{\mathcal{H}} +   ( u_{N} \vert_{t=T}  ,w_{j} )_{\mathcal{H}} \,  e^{-2i\pi T \tau } + <  \hat{f}_{N}  , w_{j}>  .
\end{equation*}

This yields, multiplying by $\hat{g}_{jN}$ and summing  over $1 \leq j \leq N$, that, for any $\tau \in \R$,
\begin{equation*}
2\pi i \tau   \|   \hat{u}_{N}  (\tau )  \| ^{2}_{\mathcal{H}}    =  - ( u_{N0}  ,  \hat{u}_{N}   )_{\mathcal{H}} +  (  u_{N} \vert_{t=T}  ,  \hat{u}_{N}  )_{\mathcal{H}} \,  e^{-2i\pi T \tau } + <  \hat{f}_{N}  ,  \hat{u}_{N} >  .
\end{equation*}
Thanks to \eqref{Conta} and \eqref{bcontiR}, there exists $C>0$ such that for any $t \in [0,T ]$,
$ \|   f_{N} (t)  \|_{\underline{\mathcal{V}}'}
   \leq C (  \|  u_{N}(t) \|_{\underline{\mathcal{V}}}     +  \| u_{N} (t)  \|^{2}_{\underline{\mathcal{V}}} ) $.
Moreover, for any $ \tau \in \R$,
$ \| \hat{f}_{N} (\tau )   \|_{\underline{\mathcal{V}}'}  \leq \int_0^T   \|  f_{N} (t)  \|_{\underline{\mathcal{V}}'} dt $.
Thus, $ (\sup_{\tau \in \R} \|      \hat{f}_{N}(\tau ) \|_{\underline{\mathcal{V}}'} ) $ is bounded,
and the initial and final values $u_{N0}$ and $ u_{N} \vert_{t=T}$ are bounded as well.
Therefore there exists $C>0$ such that
$\tau   \|   \hat{u}_{N}  (\tau )  \| ^{2}_{\mathcal{H}}   \leq C \|      \hat{u}_{N}(\tau ) \|_{\underline{\mathcal{V}}}   $.
Now, we observe that there exists $C>0$ such that for any $\tau \in \R$,
$| \tau |^{\frac14} \leq C (1+| \tau | ) (1+| \tau |)^{-\frac34} $,
to deduce that
\begin{eqnarray*}
\int_\R  | \tau |^{\frac14} \|   \hat{u}_{N}  (\tau )  \| ^{2}_{\mathcal{H}} d\tau
& \leq & C \int_\R  \frac{1+| \tau | }{1+| \tau |^{\frac34}} \|   \hat{u}_{N}  (\tau )  \|^{2}_{\mathcal{H}} d\tau
 \\ & \leq & C \int_\R  \frac{1 }{1+| \tau |^{\frac34}} \|   \hat{u}_{N}  (\tau )  \|_{\underline{\mathcal{V}}} d\tau
 + C \int_\R \|   \hat{u}_{N}  (\tau )  \|^{2}_{\underline{\mathcal{V}}} d\tau
  \\ & \leq & C  \int_\R \|   \hat{u}_{N}  (\tau )  \|^{2}_{\underline{\mathcal{V}}} d\tau ,
\end{eqnarray*}
by the Cauchy-Schwarz inequality.
Then it follows from the Parseval identity that $(| \tau |^{\frac18}  \hat{u}_{N}  (\tau)  )_{N} $ is bounded in $L^{2} (\R ; \mathcal{H})$.

Then, we can classically pass to the limit in \eqref{Ga1} as $ N \rightarrow \infty$, and obtain that \eqref{WeakNSR} is satisfied.

\par \ \par

\textbf{Endgame.} Since the bounds given by the energy estimate \eqref{NSBodyWeakEnergy} are uniform with respect to $R > R_{0}$, there exists a subsequence
$(u_{R_{k}})_{k}$ converging to $u \in C_w ( [0,T ]  ; \mathcal{H} ) \cap L^2 ( 0,T  ;  \underline{\mathcal{V}}  )$ for the weak (or weak-*) topologies, satisfying \eqref{NSBodyWeakEnergy} for almost any  $t \in [0,T ]$.
This allows to pass to the limit in all the terms involved in
\eqref{WeakNSR} except for the trilinear term.

On the other hand it is sufficient to prove for any $v \in C^\infty ([0,T ] ;  \mathcal{H} )$ such that $v |_{ (0,T)  \times \overline{\mathcal{F}_0}} \in C^{\infty }_{c} ((0,T) \times \overline{\mathcal{F}_0} )$
\[
0 = \int_{0}^{t} \Big[ (u, \partial_{t} v)_{\mathcal{H}}
 + 2  \nu a(u,v) +  b(u,u,v)    \Big] ,
\]
to deduce, by standard arguments, that $ u $ is a weak solution of \eqref{chNS1}-\eqref{chSolideci}.

  It therefore only remains to prove that there exists a subsequence, still denoted
$(u_{R_{k}})_{k}$, such that   for any  $v \in C^\infty ([0,T ] ;  \mathcal{H} )$ with $v |_{ (0,T)  \times \overline{\mathcal{F}_0}} \in C^{\infty }_{c} ((0,T) \times \overline{\mathcal{F}_0} )$,  as $ k \rightarrow \infty$,
\begin{equation}
\label{endg}
 \int_{0}^{t}  b_{R_{k}} (u_{R_{k}} ,u_{R_{k}} ,v)   \rightarrow \int_{0}^{t}  b (u ,u ,v) .
\end{equation}
First let us observe that to prove \eqref{endg}  it will be enough to show that  the sequence $(u_{R_{k}})_{k}$ is relatively compact in $L^2 ( 0,T  ; L^{2}_{loc} (\R^{3}))$.
Indeed this yields that there exists a subsequence, still denoted
$(u_{R_{k}})_{k}$,  converging to $u$ in $L^2 ( 0,T  ; L^{2}_{loc} (\R^{3}) )$, and then
we use the decomposition
\begin{equation*}
 b_{R_{k}} (u_{R_{k}} ,u_{R_{k}} ,v) -  b (u ,u ,v) =
 b_{R_{k}} (u , u ,v) -  b (u ,u ,v)
+  b_{R_{k}} (u_{R_{k}} - u,u_{R_{k}} ,v)
-  b_{R_{k}} (u  , u -u_{R_{k}} ,v) .
\end{equation*}
Observe that we can bound
\[ |b_{R_k}(u,\bar{u},v ) | \leq  C \|u|_K \|_{\mathcal{H}_K} \, \| \bar{u}|_K \|_{\mathcal{H}_K} \, \| v \|_{Lip(K)}, \]
where $C$ is independent of $R_k$, the set $K $ is such that supp $v \subset K $  and $ \|\cdot \|_{\mathcal{H}_K}$ is defined by
\[
\|\phi    \|_{\mathcal{H}_K}^2 :=    \int_{\mathcal{F}_0 \cap K } | \phi   |^2  dx   +  \int_{\mathcal{S}_0 \cap K}  \rho_{S_{0}} | \phi  |^2 dx .
\]
Hence, \eqref{endg} follows from the local strong convergence and \eqref{cvdb}.

Now, with the purpose of proving that $(u_{R_{k}})_{k}$  is relatively compact  in $L^2 ( 0,T  ; L^{2}_{loc} (\R^{3}))$, we are going to establish an a priori estimate of the time derivative of the functions $u_{R_{k}}$. Of course we already have such an estimate thanks to the Fourier transform in time applied above, but this estimate is not uniform in $R$, since we relied on the inequality  \eqref{bcontiR} which is not uniform in $R$.  Instead we are going to prove that 
$(\partial_{t} u_{R_{k}})_{k}$  is bounded in  $L^{\frac{4}{3}} ( 0,T  ; {\mathcal{V}}' )$, relying on the estimate \eqref{bcontiRu4}, which is  uniform in large $R$, rather than on \eqref{bcontiR}.
Then, by using a standard cut-off function, we can apply the Aubin-Lions lemma, see for instance \cite{Simon}[Cor.4], to conclude the desired compactness.

The bound of $(\partial_{t} u_{R_{k}})_{k}$  is obtained as follows.
We first combine  the interpolation inequality \eqref{interpo}
with the energy bounds, to see that $(u_{R_{k}})_{k}$  is bounded in
 $L^{\frac{8}{3}} ( 0,T  ; L^4(\mathcal{F}_{0}))$. %
  Next we use \eqref{bcontiRu4} and Holder's inequality to get that there exists $C  > 0$ such that for any $ k \in \N$,
  for any $v \in L^4 ( 0,T  ; \mathcal{V} )$,
  \begin{equation*}
|  \int_{0}^{t}  b_{R_{k}} (u_{R_{k}} ,u_{R_{k}} , v)  | \leq C \| v   \|_{L^{4}  ( 0,T  ; \mathcal{V})} .
\end{equation*}

 Then we easily infer from \eqref{WeakNSR} the desired estimate of $(\partial_{t} u_{R_{k}})_{k}$, and therefore the proof of Theorem \ref{NSBodyWeak} is complete.

\end{proof}

\subsection{A regularity property} \label{regproperty}

In the present case of the Navier conditions, the dynamics of the body benefits from a remarkable regularity property stated in the proposition below.
We will make use a slight variant of \eqref{bconti}, which involves the space
\begin{equation*}
\widehat{\mathcal{V}}:=   \{  \phi \in   \mathcal{V} / \ \phi\vert_{\mathcal{F}_0} \in \text{Lip} (\overline{\mathcal{F}_0} )    \} , \text{ endowed with the norm  } \|  \phi  \|_{\widehat{\mathcal{V}}} := \|  \phi  \|_\mathcal{V} + \|   \phi  \|_{ \text{Lip} ( \overline{\mathcal{F}_0})   }  .
\end{equation*}
Then one may extend  $b$ to  $\mathcal{H} \times \mathcal{H} \times \widehat{\mathcal{V}}$ such that  there exists a constant $C>0$ such that
for any $(u,v,w) \in \mathcal{H} \times \mathcal{H} \times \widehat{\mathcal{V}}$,
\begin{equation}
\label{bcontiE}
|b(u,v,w)  |  \leqslant C \| u  \|_{\mathcal{H}} \,  \| v  \|_{\mathcal{H}} \,  \| w \|_{\widehat{\mathcal{V}}} .
\end{equation}
Let us emphasize for the comfort of the reader that $\widehat{\mathcal{V}} \subset \mathcal{V} \subset \underline{\mathcal{V}}$.

Let us also denote by $\lambda_i, \, i =1,2,3$ the eigenvalues of the inertial matrix $ \mathcal{J}_0$, which is symmetric definite positive, so that, $ \lambda_i > 0 $ for all $i=1,2,3 $. Moreover, we consider the spectral norms $ \|\mathcal{J}_0 \| := \max (\lambda_i)$ and $ \|\mathcal{J}_0^{-1}\|^{-1} := \min (\lambda_i)$.

\begin{Proposition}
\label{added}
Let be given $u_{0} \in  \mathcal{H}$  and $T >0$.
Consider  a weak solution $u$ of \eqref{chNS1}-\eqref{chSolideci}  given by Theorem \ref{NSBodyWeak}.
Then $\ell $ and $r$ are in $H^{1}  ( 0,T  ; \R^{3})$ and satisfy the following: there exist
\begin{enumerate}
\item a  $6 \times 6$  definite positive symmetric  matrix $\mathcal{M}$ depending only on
$\mathcal{S}_0$, $m$ and $ \mathcal{J}_0$ such that
there exist $ \underline{m} >0 $ and $ \beta > 0 $ depending only on $ \mathcal{S}_0$ such that, for any  $F$ and $T$ in $ \mathbb{R}^3 $,
\begin{equation} \label{estimateM}
\Bigl\|\mathcal{M}^{-1} \begin{bmatrix}
F \\ T \end{bmatrix}  \Bigr\|  \leq 2 ( m^{-1} \|F \| + \|\mathcal{J}_0^{-1}\| \|T \|) \ \text{ for } m \geq \underline{m}, \, \text{ and } \,  \lambda_i \geq \beta,\, i =1,2,3 ,
\end{equation}
\item some functions $(v_i )_{i \in \{1,\ldots,6\}}$ in $  \widehat{\mathcal{V}}$  depending only on
$\mathcal{S}_0$,
\end{enumerate}
such that there holds in $L^2 ( 0,T)$:
\begin{equation}
\label{AM2}
    \mathcal{M}  \begin{bmatrix} \ell \\ r \end{bmatrix} '
   = ( 2\nu a(u, v_i )  + b(u,u, v_i) )_{i \in \{1,\ldots,6\}}  .
\end{equation}
\end{Proposition}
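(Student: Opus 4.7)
The guiding idea is the classical added-mass construction: I will build six time-independent test functions $v_1,\dots,v_6\in\widehat{\mathcal{V}}$, plug them into the weak formulation, and recognise the matrix $\mathcal{M}$ as the sum of the genuine inertia matrix and the added-mass matrix associated with $\mathcal{S}_0$. First, for each $i\in\{1,\dots,6\}$ let $K_i$ denote the $i$th elementary rigid velocity field ($K_i=e_i$ for $i\le 3$ and $K_i=e_{i-3}\wedge x$ for $i\ge 4$), and let $\Phi_i$ be the unique solution (decaying at infinity) of the exterior Neumann problem
\begin{equation*}
\Delta \Phi_i=0 \text{ in } \mathcal{F}_0,\qquad \nabla\Phi_i\cdot n=K_i\cdot n \text{ on }\partial\mathcal{S}_0.
\end{equation*}
The compatibility condition is satisfied since $K_i$ is divergence-free on $\mathcal{S}_0$. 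Elliptic regularity up to the smooth boundary gives $\Phi_i\in C^\infty(\overline{\mathcal{F}_0})$ with the three-dimensional decay $\Phi_i=O(|x|^{-2})$, $\nabla\Phi_i=O(|x|^{-3})$, $\nabla^2\Phi_i=O(|x|^{-4})$. I then define $v_i$ on $\R^3$ by setting $v_i=K_i$ on $\mathcal{S}_0$ and $v_i=\nabla\Phi_i$ on $\mathcal{F}_0$; the normal traces match on $\partial\mathcal{S}_0$, so $v_i$ is divergence-free in $\R^3$, rigid in $\mathcal{S}_0$, hence $v_i\in\mathcal{H}$. The decay of $\nabla\Phi_i$ and $\nabla^2\Phi_i$ yields $v_i\in\mathcal{V}$, and $v_i|_{\mathcal{F}_0}\in\text{Lip}(\overline{\mathcal{F}_0})$, so $v_i\in\widehat{\mathcal{V}}$; moreover each $v_i$ depends only on $\mathcal{S}_0$.

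Next, using \eqref{plusimple} I compute $(u,v_i)_\mathcal{H}$ for any $u\in\mathcal{H}$ with associated $(\ell,r)=:p\in\R^6$. The rigid part gives $m\,\ell\cdot\ell_{v_i}+\mathcal{J}_0 r\cdot r_{v_i}=(\mathcal{M}_0 p)_i$, where $\mathcal{M}_0=\operatorname{diag}(m\,\Id_3,\mathcal{J}_0)$. For the fluid part, writing $u_\mathcal{S}\cdot n=\sum_j p_j K_j\cdot n=\sum_j p_j\nabla\Phi_j\cdot n$ on $\partial\mathcal{S}_0$ and integrating by parts (using $\operatorname{div}u=0$ in $\mathcal{F}_0$, the impermeability condition $u\cdot n=u_\mathcal{S}\cdot n$, $\Delta\Phi_j=0$, and decay at infinity) I obtain
\begin{equation*}
\int_{\mathcal{F}_0}u\cdot\nabla\Phi_i\,dx=\sum_{j=1}^6 p_j\int_{\mathcal{F}_0}\nabla\Phi_i\cdot\nabla\Phi_j\,dx=(\mathcal{M}_a p)_i,
\end{equation*}
where $\mathcal{M}_a:=\bigl(\int_{\mathcal{F}_0}\nabla\Phi_i\cdot\nabla\Phi_j\bigr)_{ij}$ is the added-mass matrix, a symmetric positive semi-definite Gram matrix depending only on $\mathcal{S}_0$. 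Setting $\mathcal{M}:=\mathcal{M}_0+\mathcal{M}_a$ yields $(u,v_i)_\mathcal{H}=(\mathcal{M}p)_i$, and $\mathcal{M}$ is symmetric positive definite with the required dependence.

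Now I use the weak formulation \eqref{WeakNS} with the time-independent test function $v_i$, which is legitimate after the density extension discussed in the paper since $v_i\in\widehat{\mathcal{V}}\subset\mathcal{V}$. Since $\partial_t v_i=0$, this produces
\begin{equation*}
(\mathcal{M}p)_i(t)-(\mathcal{M}p)_i(0)=\int_0^t\bigl[2\nu\,a(u,v_i)+b(u,u,v_i)\bigr]\,ds.
\end{equation*}
Thanks to \eqref{Conta} and the energy inequality \eqref{NSBodyWeakEnergy}, $a(u,v_i)\in L^2(0,T)$; thanks to the extension \eqref{bcontiE} and $u\in L^\infty(0,T;\mathcal{H})$, $b(u,u,v_i)\in L^\infty(0,T)$. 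Therefore the right-hand side is absolutely continuous with $L^2$ derivative, which yields both the $L^2(0,T)$ identity \eqref{AM2} and, after inverting $\mathcal{M}$, the fact that $\ell,r\in H^1(0,T;\R^3)$.

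It remains to prove \eqref{estimateM}. Factoring $\mathcal{M}=\mathcal{M}_0(I+\mathcal{M}_0^{-1}\mathcal{M}_a)$, a Neumann-series argument gives $\mathcal{M}^{-1}=(I+\mathcal{M}_0^{-1}\mathcal{M}_a)^{-1}\mathcal{M}_0^{-1}$, with $\|(I+\mathcal{M}_0^{-1}\mathcal{M}_a)^{-1}\|\le 2$ as soon as $\|\mathcal{M}_0^{-1}\mathcal{M}_a\|\le \tfrac12$. Since $\|\mathcal{M}_0^{-1}\|=\max(m^{-1},\|\mathcal{J}_0^{-1}\|)$ and $\|\mathcal{M}_a\|$ depends only on $\mathcal{S}_0$, the choice $\underline{m}:=2\|\mathcal{M}_a\|$ and $\beta:=2\|\mathcal{M}_a\|$ ensures $\|\mathcal{M}_0^{-1}\mathcal{M}_a\|\le\tfrac12$. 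Combining with $\|\mathcal{M}_0^{-1}(F,T)^T\|\le m^{-1}\|F\|+\|\mathcal{J}_0^{-1}\|\,\|T\|$ then yields \eqref{estimateM}. The main delicacy is the first step, namely verifying that the Kirchhoff-potential extension of the six rigid velocities really lives in $\widehat{\mathcal{V}}$, which hinges on the $3$D decay of $\Phi_i$ and on the matching of normal traces at $\partial\mathcal{S}_0$; the rest is essentially a careful bookkeeping.
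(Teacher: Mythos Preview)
Your proof is correct and follows essentially the same route as the paper: you introduce the Kirchhoff potentials $\Phi_i$, build the six test functions $v_i$ in $\widehat{\mathcal V}$, identify $(u,v_i)_{\mathcal H}=(\mathcal M p)_i$ with $\mathcal M=\mathcal M_0+\mathcal M_a$ via integration by parts, and then read off \eqref{AM2} and the $H^1$ regularity from the weak formulation together with \eqref{Conta} and \eqref{bcontiE}. Your Neumann-series derivation of \eqref{estimateM} is just a repackaging of the paper's absorption argument (both amount to $\|\mathcal M_0^{-1}\mathcal M_a\|\le \tfrac12$ for $m,\lambda_i\ge 2\|\mathcal M_a\|$); one cosmetic remark is that for the \emph{exterior} Neumann problem no compatibility condition is needed for existence---the zero-flux property $\int_{\partial\mathcal S_0}K_i\cdot n=0$ is what upgrades the decay from $\Phi_i=O(|x|^{-1})$ to $\Phi_i=O(|x|^{-2})$, which is exactly what you need for $v_i\in\widehat{\mathcal V}$.
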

\begin{proof}
The matrix $\mathcal{M}$ is usually referred to as virtual inertia tensor, it incorporates the added mass of the solid $\mathcal{M}_2$, and is defined by
\begin{equation*}
  \mathcal{M}_{1}
:= \begin{bmatrix} m \Id_3 & 0 \\ 0 & \mathcal{J}_0 \end{bmatrix}  ,   \quad
  \mathcal{M}_{2} := \begin{bmatrix} \displaystyle\int_{\mathcal{F}_0} \nabla \Phi_i \cdot \nabla \Phi_j \ dx \end{bmatrix}_{i,j \in \{1,\ldots,6\}}  \text{ and } \mathcal{M} := \mathcal{M}_{1} + \mathcal{M}_{2} ,
\end{equation*}
where the functions $\Phi_{i}$, usually referred to as the Kirchhoff potentials,  as the solutions of the following problems:
\begin{equation*}
	-\Delta \Phi_i = 0 \quad   \text{for}  \ x\in \mathcal{F}_{0} ,
\end{equation*}
\begin{equation*}
	 \Phi_i \rightarrow 0 \quad  \text{as}  \ |x| \rightarrow  \infty,
\end{equation*}
\begin{equation*}
	\frac{\partial \Phi_i}{\partial n}=K_i
		  \quad  \text{for}  \  x\in \partial \mathcal{S}_{0},
\end{equation*}
where
\begin{equation*}
K_i:= \left\{\begin{array}{ll}
n_i & \text{if} \ i=1,2,3 ,\\ \relax
[ x\wedge n]_{i-3}& \text{if} \ i=4,5,6.
\end{array}\right.
\end{equation*}
We observe that the matrix $\mathcal{M}_{2}$ depends only on $ \mathcal{S}_{0}$ and is nonnegative symmetric
so that $\mathcal{M}$ depends only on
$\mathcal{S}_0$, $m$ and $ \mathcal{J}_0$, and  is definite positive symmetric.

Now for any  $F$ and $T$ in $ \mathbb{R}^3 $, let
\[   \begin{bmatrix}
x \\ y \end{bmatrix}   := \mathcal{M}^{-1} \begin{bmatrix}
F \\ T \end{bmatrix}  =   \begin{bmatrix}
m^{-1} \Id_3 & 0  \\ 0 & \mathcal{J}_0^{-1} \end{bmatrix}  \left( \begin{bmatrix}
F \\ T \end{bmatrix}   - \mathcal{M}_2 \begin{bmatrix}
x \\ y \end{bmatrix} \right).\]
Since $ \mathcal{M}_2 $ depends only on $ \mathcal{S}_0 $, there exists $C > 0 $  depending only on $ \mathcal{S}_0$ such that
$\Bigl\| \mathcal{M}_2  \begin{bmatrix}
x \\ y \end{bmatrix}  \Bigl\| \leq C \Bigl\|\begin{bmatrix}
x \\ y \end{bmatrix}  \Bigr\|.$
Then, we can estimate
\[ \Bigl\| \begin{bmatrix}
x \\ y \end{bmatrix}  \Bigr\|\leq m^{-1} \|F \| + \|\mathcal{J}_0^{-1}\|\|T\| + \max\{m^{-1}, \|\mathcal{J}_0^{-1}\|\} C \Bigl\| \begin{bmatrix}
x \\ y \end{bmatrix}  \Bigr\|.\]
It is therefore sufficient to take $ \underline{m} = \beta = 2 C$ to obtain \eqref{estimateM}.

For $i = 1,\ldots,6$, we introduce the function $v_i $ defined by
\begin{equation*}
v_i := \nabla  \Phi_i \text{ in } \mathcal{F}_{0}  \text{ and }v_i :=
\left\{\begin{array}{ll}
e_i & \text{if} \ i=1,2,3 ,\\ \relax
 e_{i-3} \wedge x & \text{if} \ i=4,5,6,
\end{array}\right.
  \text{ in } \mathcal{S}_{0}.
\end{equation*}
These functions only depend on $ \mathcal{S}_{0}$.
Moreover they are in $\widehat{\mathcal{V}}$. Observe in particular that $\nabla v_i $ decays like $1/ | \cdot |^{4}$ at infinity so that $\int_{ \mathcal{F}_0 } | \nabla v_i (y) |^2 (1 + | y |^2 ) dy < + \infty  $, see for instance  \cite{Childress}[4.3.1].
We can therefore take them as test functions in  \eqref{WeakNS}.
Indeed we apply
\eqref{WeakNS} to $v = v_i$ and we derive in time to obtain, for all $t\in [0,T ] $,
\begin{equation}
\label{WeakNS2}
\partial_t (u,v_i)_{\mathcal{H}}   =
  2  \nu a(u,v_i) +  b(u,u,v_i)    .
\end{equation}
Let us  prove that
\begin{equation}
\label{energyajout}
\begin{bmatrix} (u,v_i)_{\mathcal{H}}  \end{bmatrix}_{i,j \in \{1,\ldots,6\}}
=  \mathcal{M}  \begin{bmatrix} \ell \\ r \end{bmatrix} .
\end{equation}
To this end, we first use \eqref{plusimple} to arrive at
\begin{equation}
\label{energyajout1}
\begin{bmatrix} (u,v_i)_{\mathcal{H}}  \end{bmatrix}_{i,j \in \{1,\ldots,6\}}
=
\begin{bmatrix}  \int_{\mathcal{F}_{0} } u \cdot \nabla  \Phi_i  \end{bmatrix}_{i,j \in \{1,\ldots,6\}}
+
\mathcal{M}_1  \begin{bmatrix} \ell \\ r \end{bmatrix} .
\end{equation}
Then, using an integration by parts, we observe that
\begin{equation*}
\int_{\mathcal{F}_{0} } u \cdot \nabla  \Phi_i  = \int_{\partial\mathcal{S}_{0} } (u \cdot n)  \Phi_i   = \int_{\partial\mathcal{S}_{0} } (u_{\mathcal{S}} \cdot n)  \Phi_i  ,
\end{equation*}
so that, expanding $u_{\mathcal{S}}$ and using another integration by parts, give us
\begin{equation}
\label{energyajout2}
\begin{bmatrix}  \int_{\mathcal{F}_{0} } u \cdot \nabla  \Phi_i  \end{bmatrix}_{i,j \in \{1,\ldots,6\}}
=
 \mathcal{M}_2  \begin{bmatrix} \ell \\ r \end{bmatrix} .
\end{equation}
Gathering \eqref{energyajout1} and \eqref{energyajout2} yields \eqref{energyajout}.
Then combining  \eqref{WeakNS2} and \eqref{energyajout} furnishes  \eqref{AM2}.

Therefore it only remains to prove that $\ell $ and $r$ are in $H^{1}  ( 0,T  ; \R^{3})$.
Since the  matrix $\mathcal{M}$ is time-independent it is sufficient to prove that the right hand side of \eqref{AM2} is in $L^{2}  ( 0,T  ; \R^{3})$.
Indeed this follows from \eqref{Conta},  \eqref{bcontiE}, from that $(v_i )_{i \in \{1,\ldots,6\}}$ are in $ \widehat{\mathcal{V}}$ and that $u$ is in $  C_w ( [0,T ]  ;  \mathcal{H}) \cap  L^2 ( 0,T  ; \underline{\mathcal{V}})$.
\end{proof}

Let us emphasize that this property seems to be a particularity of the case of the Navier conditions. In particular in the case of the no-slip conditions, the corresponding weak formulation
 involves test functions continuous across the body's boundary, a feature which is not satisfied by
the functions $v_i $.
To our knowledge the counterpart of Proposition \ref{added} in the case of the no-slip conditions is not known.

\par
\  \par
The estimate
 \eqref{estimateM}  will be useful for the next section.
It is also interesting for the sequel to observe that
\begin{equation}
\label{ouf}
( b(u,u, v_i) )_{i \in \{1,\ldots,6\}}
=
 \begin{bmatrix} m \, r \wedge \ell  \\ ( \mathcal{J}_0 r ) \wedge r \end{bmatrix}
+ (\int_{\mathcal{F}_0 } \Big(     [    \big( u -  u_\mathcal{S} )  \cdot\nabla   \big) \nabla \Phi_i] \cdot u   -  \det (r_u , u ,   \nabla \Phi_i ) \Big) )_{i \in \{1,\ldots,6\}}  .
\end{equation}

\subsection{The infinite inertia limit}
\label{SectionInertiaNS}

Let us also mention that Theorem \ref{NSBodyWeak}  extends to the case of a moving body some earlier results, in particular see \cite{cmr,Iftimie-Sueur}, about the existence of Leray solutions in the case where  Navier conditions  are considered but on a fixed boundary.
In this case, the system reads
\begin{gather}
\label{chNS1Mass}
\frac{\partial u}{\partial t}+   u\cdot \nabla u   + \nabla p =  \nu \Delta u \ \text{ for } \ x \in \mathcal{F}_0 , \\
\label{chNS2Mass}
\div u = 0 \ \text{ for } \  x \in \mathcal{F}_0  ,  \\
\label{chNS3Mass}
u  \cdot n =  0  \ \text{ for } \ x\in \partial \mathcal{S}_0,  \\
\label{chNS4Mass}
(D(u) n )  \wedge n = - \alpha u \wedge n   \ \text{ for } \ x\in \partial \mathcal{S}_0 ,  \\
\label{chNSci2Mass}
u |_{t= 0} = u_0 ,
\end{gather}
and a weak Leray solution of \eqref{chNS1Mass}-\eqref{chNSci2Mass} is by definition a function
\begin{equation*}
u \in C_w ( [0,T ]  ; L^{2}_{\sigma} (\mathcal{F}_0  ) ) \cap L^2 ( 0,T  ;  H^1 ( \mathcal{F}_0  ) ) ,
\end{equation*}
such that
\begin{enumerate}
\item for all $v \in   H^1 ( 0,T   ; L^{2}_{\sigma} (\mathcal{F}_0  )) \cap L^4 ( 0,T  ;   H^1 ( \mathcal{F}_0  ) )$,
and for all $t\in [0,T ] $,
\begin{equation}
\label{WeakNSMass}
  \int_{\mathcal{F}_0 }  u  (t,\cdot) \cdot   v  (t,\cdot) dx - \int_{\mathcal{F}_0 }  u_{0}   \cdot   v \vert_{t=0} dx
  =
  \int_{0}^{t} \Big[
   \int_{\mathcal{F}_0 }  u   \cdot \partial_{t}   v dx
  + 2  \nu a^{*} (u ,v) +  b^{*} (u ,u ,v)    \Big] ,
\end{equation}
where
\begin{eqnarray*}
a^{*} (u,v) := - \alpha  \int_{\partial \mathcal{S}_0} u  \cdot v  - \int_{\mathcal{F}_0 }     D(u) :  D(v)
,\quad \text{and} \quad   b^{*} (u,v,w) := \int_{\mathcal{F}_0 }  [   u  \cdot\nabla w ] \cdot v ,
\end{eqnarray*}
\item for any $t \in [0,T ]$,
\begin{equation*}
%\label{NSBodyWeakEnergyMass}
\frac{1}{2} \| u (t, \cdot)  \|^{2}_{L^{2} (\mathcal{F}_0  )} + 2 \nu \int_{(0,t) \times  \mathcal{F}_0} | D(u) |^{2}   + 2\alpha\nu \int_0^t \int_{ \partial \mathcal{S}_0} |u|^2
\leq \frac{1}{2}   \| u_{0}  \|^{2}_{L^{2} (\mathcal{F}_0 ) }     .
\end{equation*}
\end{enumerate}

Here, $L^{2}_{\sigma}  (\mathcal{F}_0  )$  denotes the space of the divergence free vector fields in $L^{2}  (\mathcal{F}_0  )$ which are tangent to the solid's boundary $\partial \mathcal{S}_0 $.
\par \ \par
The following result shows that the case with the body fixed can be thought as \textit{the limit of infinite inertia}, that is, when $m$ and the eigenvalues $(\lambda_i )_{i =1,2,3} $ of $\mathcal{J}_0 $ converge to $+\infty$ with $\lambda_i = O(\lambda_j )$ for any $i,j$. Let us observe that the eigenvalues of $\mathcal{J}_0$ are required to diverge at the same order.
  This last condition is quite natural if one thinks that  the solid density $\rho_{{\mathcal S}_0}$ is multiplied by a factor converging to $+\infty$
in \eqref{EqMasse} and \eqref{eqJ}.
This condition can alternatively be written as   $ \|\mathcal{J}_0 \| = O( \|\mathcal{J}_0^{-1}\|^{-1} )$ (using the spectral norms introduced in Section   \ref{regproperty}).

Let us point out that $\| \mathcal{M}^{-1}  \|_{\R^{6 \times 6}} \rightarrow 0$ as $m$ and $ (\lambda_i)_{i =1,2,3},$ converge to $+\infty$, as a consequence of
the estimate   \eqref{estimateM}, and therefore in particular
$\| \mathcal{M}^{-1}  \|_{\R^{6 \times 6}} \rightarrow 0$  in the infinite inertia limit.
 Another observation that will be useful is that there holds for any $r \in \R^{3}$,
\begin{equation}
\label{cdotVSwedge}
\|  ( \mathcal{J}_0 r ) \wedge r \| \leq C  ( \mathcal{J}_0 r ) \cdot r  ,
\end{equation}
for a constant $C >0$, uniform in the infinite inertia limit.

Indeed, introducing some normalized eigenvectors $ (r_{i})_{i =1,2,3},$  associated to  $ (\lambda_i)_{i =1,2,3},$ respectively, we can write for some real coefficients $ (\alpha_i)_{i =1,2,3},$ that
$r = \sum_{i=1}^{3} \alpha_{i} r_{i} $ and therefore
$( \mathcal{J}_0 r ) \wedge r = \sum_{i,j} \alpha_i \alpha_j \lambda_i (r_i \wedge r_{j} ) $.
So that, for some constants uniform in the infinite inertia limit, one has:
\begin{equation*}
\|  ( \mathcal{J}_0 r ) \wedge r \| \leq  C
 \sum_{i,j} |  \alpha_i  |     | \alpha_j  |   \lambda_i
 \leq  C'   \sum_{i,j} |  \alpha_i  |     | \alpha_j  |  \sqrt{ \lambda_i } \sqrt{ \lambda_j }
 \leq C'' (  \sum_i |  \alpha_i  |    \sqrt{ \lambda_i } )^{2}
 \leq  C'''  \sum_i  \alpha_i^{2}  \lambda_i = C'''  ( \mathcal{J}_0 r ) \cdot r   .
\end{equation*}
\begin{Theorem}
\label{NSBodyWeakMassless}
Let be given $u_{0} \in  \mathcal{H}$  with $\ell_{0} = r_{0} = 0$ and $T >0$.
For any $m$ and $ \mathcal{J}_0 $ we
consider  a weak solution $u$ of \eqref{chNS1}-\eqref{chSolideci} in $C_w ( [0,T ]  ;  \mathcal{H}) \cap  L^2 ( 0,T  ; \underline{\mathcal{V}}  )$ given by Theorem \ref{NSBodyWeak}.
Then  in the infinite inertia limit, $u \vert_{\mathcal{F}_0}$ converges, up to a subsequence,  in $L^2 ( 0,T; L^{2}_{loc} (\mathcal{F}_0  )   )$ to  a weak solution of \eqref{chNS1Mass}-\eqref{chNSci2Mass} and $\ell $ and $r$ converge to $0$ in $H^{1}  ( 0,T  ; \R^{3})$.
\end{Theorem}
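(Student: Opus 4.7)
The starting observation is that, since $\ell_0 = r_0 = 0$, formula \eqref{plusimple} gives $\|u_0\|_\mathcal{H}^2 = \|u_0\|_{L^2(\mathcal{F}_0)}^2$, a quantity independent of $m$ and $\mathcal{J}_0$. The energy inequality \eqref{NSBodyWeakEnergy} therefore provides bounds on $u$ in $L^\infty(0,T;\mathcal{H}) \cap L^2(0,T;\underline{\mathcal{V}})$ that are uniform in $m$ and $\mathcal{J}_0$. Splitting the $\mathcal{H}$-norm into fluid and solid parts, the same inequality yields $m|\ell(t)|^2 + (\mathcal{J}_0 r(t))\cdot r(t) \leq \|u_0\|_{L^2(\mathcal{F}_0)}^2$ for a.e.\ $t$, hence $|\ell(t)|^2 \leq \|u_0\|^2/m$ and $|r(t)|^2 \leq \|\mathcal{J}_0^{-1}\|\,\|u_0\|^2$. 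Under the infinite inertia limit, this forces $\ell, r \to 0$ in $L^\infty(0,T;\R^3)$.

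To upgrade this to $H^1(0,T)$ convergence I would invoke Proposition \ref{added}: the identity \eqref{AM2} reads $(\ell,r)' = \mathcal{M}^{-1}\bigl(2\nu a(u,v_i) + b(u,u,v_i)\bigr)_{i=1,\ldots,6}$. Because the test vectors $v_i$ depend only on $\mathcal{S}_0$ and belong to $\widehat{\mathcal{V}}$, the continuity bounds \eqref{Conta} and \eqref{bcontiE}, combined with the uniform estimates of the first step, show that the right-hand side vector is bounded in $L^2(0,T;\R^6)$ independently of $m, \mathcal{J}_0$. Meanwhile estimate \eqref{estimateM} gives $\|\mathcal{M}^{-1}\|_{\R^{6\times 6}} \to 0$ in the infinite inertia limit, so $\|(\ell',r')\|_{L^2(0,T)} \to 0$, and the announced $H^1$ convergence follows.

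For the fluid I would reproduce the endgame argument of the proof of Theorem \ref{NSBodyWeak}: the interpolation \eqref{interpo} together with the weak formulation provide a bound on $\partial_t u$ in $L^{4/3}(0,T;\mathcal{V}')$, uniform in $m$ and $\mathcal{J}_0$, from which Aubin-Lions extracts a subsequence converging strongly in $L^2(0,T;L^2_{loc}(\R^3))$. To identify the limit I would take any test field $w$ admissible for \eqref{chNS1Mass}-\eqref{chNSci2Mass} and extend it by zero inside $\mathcal{S}_0$, producing $v \in \mathcal{H}$ with $\ell_v = r_v = 0$ admissible in Definition \ref{Weak}. For such $v$ the differences $a(u,v) - a^*(u,v) = \alpha\int_{\partial\mathcal{S}_0} u_\mathcal{S}\cdot v$ and $b(u,u,v) - b^*(u,u,v) = -\int_{\mathcal{F}_0}(u_\mathcal{S}\cdot\nabla v)\cdot u - \int_{\mathcal{F}_0}\det(r,u,v)$ are linear in $u_\mathcal{S}$ or $r$, and vanish in the limit thanks to the strong convergence $\ell, r \to 0$ combined with the local $L^2$ compactness of $u$. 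The weak formulation of the limit system is thereby recovered, and the energy inequality \eqref{NSBodyWeakEnergyMass} passes by weak lower semicontinuity, using once more that $u_\mathcal{S} \to 0$ to replace $|u - u_\mathcal{S}|^2$ by $|u|^2$ on the boundary.

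The subtle step is the $H^1$ convergence: the quantitative control \eqref{estimateM} of the added mass is what makes the argument work, since the right-hand side of \eqref{AM2} is merely bounded (and contains the viscous contribution $2\nu a(u,v_i)$ which, involving $\|u\|_{\underline{\mathcal{V}}}$, is not small), so the entire smallness must be extracted from $\mathcal{M}^{-1}$. The delicate check underlying \eqref{estimateM} is that the non-local virtual-inertia block $\mathcal{M}_2$ depends only on $\mathcal{S}_0$ and is thus dominated by $\mathcal{M}_1$ as $m$ and the eigenvalues of $\mathcal{J}_0$ grow.
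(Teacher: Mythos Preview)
Your overall strategy matches the paper's, but there is a genuine gap in the $H^1$ argument for $(\ell,r)$. You claim that $(2\nu a(u,v_i)+b(u,u,v_i))_{i=1,\ldots,6}$ is bounded in $L^2(0,T;\R^6)$ uniformly in $m,\mathcal{J}_0$, invoking \eqref{bcontiE}. This fails: the norm $\|v_i\|_{\widehat{\mathcal{V}}}$ contains $\|v_i\|_{\mathcal{H}}$, and for $i=1,2,3$ one has $\|v_i\|_\mathcal{H}^2 = \|\nabla\Phi_i\|_{L^2(\mathcal{F}_0)}^2 + m$, which blows up. More concretely, formula \eqref{ouf} shows that $(b(u,u,v_i))_{i=1,2,3}$ contains the gyroscopic term $m\,r\wedge\ell$; the energy bound only gives $\sqrt{m}\,|\ell|\leq C$ and $\sqrt{\min\lambda_i}\,|r|\leq C$, so $|m\,r\wedge\ell|\leq C\sqrt{m/\min\lambda_i}$, which need not stay bounded under the stated infinite inertia limit (no relation between $m$ and the $\lambda_i$ is assumed).

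The paper repairs this by splitting via \eqref{ouf} into $\mathcal{T}_2 = \begin{bmatrix} m\,r\wedge\ell \\ (\mathcal{J}_0 r)\wedge r\end{bmatrix}$ and a remainder $(\mathcal{T}_{1,i})_i$ that \emph{is} uniformly bounded. For $\mathcal{M}^{-1}\mathcal{T}_2$ one uses the \emph{block} structure of \eqref{estimateM}, not merely $\|\mathcal{M}^{-1}\|\to 0$: this gives $\|\mathcal{M}^{-1}\mathcal{T}_2\|\leq 2\bigl(|r\wedge\ell| + \|\mathcal{J}_0^{-1}\|\,|(\mathcal{J}_0 r)\wedge r|\bigr)$, where the factor $m$ has cancelled. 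Then $|r\wedge\ell|\to 0$ directly, while the second term needs the additional inequality \eqref{cdotVSwedge} (which is where the hypothesis $\lambda_i=O(\lambda_j)$ enters) to bound $|(\mathcal{J}_0 r)\wedge r|\leq C(\mathcal{J}_0 r)\cdot r$. A secondary point: for the Aubin--Lions step the paper first passes to fluid-only test functions (Lemma \ref{Lemmapeupres}) so as to bound $\partial_t u\vert_{\mathcal{F}_0}$ in the dual of an inertia-independent space $\tilde{\mathcal{V}}$; your bound in $\mathcal{V}'$ is formally in a space whose norm varies with $m,\mathcal{J}_0$.
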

\begin{proof}
We infer from \eqref{NSBodyWeakEnergy} that  $u$  is bounded in $L^{\infty} (0,T ; \mathcal{H}) \cap L^{2} (0,T ;  \underline{\mathcal{V}}  )$ uniformly with respect to the inertia.
Not only that but we also obtain that $ \ell $ and $r$ converge to $0$ in $L^{\infty}  ( 0,T;\R^3 )$  in the infinite inertia limit, because $ (\mathcal{J}_0 r )\cdot r \geq \min(\lambda_i) \|r\|^2.$

The following lemma is quite simple to establish but will be useful in the sequel.
\begin{Lemma}
\label{Lemmapeupres}
Let $u$ be as in Theorem \ref{NSBodyWeakMassless}. Then for any   $v \in   H^1 ( 0,T   ; L^{2}_{\sigma} (\mathcal{F}_0  )) \cap L^4 ( 0,T  ;   \tilde{\mathcal{V}}  )$, for any $t\in [0,T ] $,
\begin{equation}
\label{peupres}
  \int_{\mathcal{F}_0 }  u  (t,\cdot) \cdot   v  (t,\cdot) dx - \int_{\mathcal{F}_0 }  u_{0}   \cdot   v \vert_{t=0} dx
  =
  \int_{0}^{t} \Big[
   \int_{\mathcal{F}_0 }  u   \cdot \partial_{t}   v dx
  + 2  \nu a^{*} (u ,v) +  b^{*} (u ,u ,v)  + F(u ,v)    \Big] ,
\end{equation}
where
\begin{equation*}
F(u ,v)  :=  2 \alpha \nu \int_{\partial \mathcal{S}_0}  u_\mathcal{S}  \cdot v
- \int_{\mathcal{F}_0 } \Big(     [    u_\mathcal{S}   \cdot\nabla v ] \cdot u   -  \det (r_u , u ,  v ) \Big) .
\end{equation*}
\end{Lemma}
Above $\tilde{\mathcal{V}}$  denotes the space
\begin{equation*}
\tilde{\mathcal{V}} := \{  \phi \in  L^{2}_{\sigma}  (\mathcal{F}_0  ) / \ \int_{ \mathcal{F}_0 } | \nabla \phi  (y) |^2 (1 + | y |^2 ) dy < + \infty   \} ,
\end{equation*}
endowed with the norm
\begin{equation*}
\|   \phi \|_{\tilde{\mathcal{V}}} := \|  \phi    \|_{ L^{2}  (\mathcal{F}_0  )} + \|  \nabla \phi  \|_{L^{2}  (\mathcal{F}_0  , (1 + | y |^2 )^{\frac12} dy)} .
\end{equation*}
\begin{proof}[Proof of Lemma \ref{Lemmapeupres}]
It is sufficient to extend $v$  by $0$ in $\mathcal{S}_0$ to obtain a function in $  H^1 ( 0,T   ; \mathcal{H} ) \cap L^4 ( 0,T  ;  \mathcal{V} )$ which is used as a test function in \eqref{WeakNS}.
 This provides \eqref{peupres}.
\end{proof}

Then,  proceeding as in Section \ref{Leray}, we obtain a bound of  $\partial_{t} u \vert_{\mathcal{F}_0}$ in  $L^{\frac{4}{3}} ( 0,T  ; \tilde{\mathcal{V}}' )$  which is uniform in the infinite inertia limit.
We therefore deduce that the sequence of weak solutions  $u $ is relatively compact in $L^{2} (0,T ; L^2_{loc} (  \mathcal{F}_0   ))$.
Thus the restrictions of $u$ to $\mathcal{F}_0 $ converge, up to a subsequence,  to a limit $u^*$ weakly-* in $L^{\infty} (0,T ; L^{2} (\mathcal{F}_0  ))$, weakly in $L^2 ( 0,T  ;   H^1 ( \mathcal{F}_0  ) )$ and strongly in  $L^{2} (0,T ; L^2_{loc} (  \mathcal{F}_0 ) ) $.

Let us now prove that $u^*$ is a weak solution of \eqref{chNS1Mass}-\eqref{chNSci2Mass}.
We deduce from the above convergence that $F(u ,v)$ converges to $0$ in $L^1 ( 0,T )$, and passing to limit in the other terms of \eqref{peupres}
we can conclude that $u^*$ satisfies \eqref{WeakNSMass} for any
$v \in   H^1 ( 0,T   ; L^{2}_{\sigma} (\mathcal{F}_0  )) \cap L^4 ( 0,T  ;   H^1 ( \mathcal{F}_0  ) )$ such that
$\int_{ \mathcal{F}_0 } | \nabla v  (y) |^2 (1 + | y |^2 ) dy < + \infty  $.
Then one easily remove this last condition by using  Lebesgue's dominated convergence theorem.
Thus  $u^*$ is a weak solution of \eqref{chNS1Mass}-\eqref{chNSci2Mass}.

In order to finish the proof of Theorem \eqref{NSBodyWeakMassless} it only remains to prove that
$\ell' $ and $r'$ converge to $0$ in $L^{2}  ( 0,T  ; \R^{3})$.
This relies on the regularity property established in the previous section.
Indeed we define, for $i \in \{1,\ldots,6\}$,
\begin{equation*}
\mathcal{T}_{1,i}  := \int_{\mathcal{F}_0 } \Big(     [    \big( u -  u_\mathcal{S} )  \cdot\nabla   \big) \nabla \Phi_i] \cdot u   -  \det (r_u , u ,   \nabla \Phi_i ) \Big) ,
\end{equation*}
and $\mathcal{T}_{2} :=
 \begin{bmatrix} m \, r \wedge \ell  \\ ( \mathcal{J}_0 r ) \wedge r \end{bmatrix} $,
so that from \eqref{AM2} and \eqref{ouf} we infer that
\begin{equation}
\label{AM2new}
   \begin{bmatrix} \ell \\ r \end{bmatrix} '
   =  2\nu \mathcal{M}^{-1} (a (u, v_i )  )_{i \in \{1,\ldots,6\}} + \mathcal{M}^{-1} (  \mathcal{T}_{1,i})_{i \in \{1,\ldots,6\}}   + \mathcal{M}^{-1} \mathcal{T}_{2}  .
\end{equation}

Since $u$  is bounded in $L^{\infty} (0,T ; \mathcal{H}) \cap L^{2} (0,T ;  \underline{\mathcal{V}}  )$, the functions $(v_i )_{i \in \{1,\ldots,6\}}$ are in $  \widehat{\mathcal{V}}$  and depend  only on
$\mathcal{S}_0$,  and
$\| \mathcal{M}^{-1}  \|_{\R^{6 \times 6}} \rightarrow 0$ in the infinite inertia limit, we infer easily from  \eqref{Conta} that the first term of the right hand side of \eqref{AM2new} vanishes in $L^{2}  ( 0,T  ; \R^{6})$
 in the infinite inertia limit.

On the other hand we can bound the second term as follows: for any $i \in \{1,\ldots,6\}$,  for any $t$,
$\| \mathcal{T}_{1,i}  \|   \leq C  ( \|  u \|^{2}_{L^{2} (\mathcal{F}_0  )} + \|  \ell \|^2  + \|r\|^{2} )$.
Hence, thanks to the energy bound,  we get that $\mathcal{T}_{1,i} $ is bounded  in the infinite inertia limit.
Therefore the second term of the right hand side of \eqref{AM2new} also vanishes in $L^{2}  ( 0,T  ; \R^{6})$
 in the infinite inertia limit.

Finally, in order to deal with the last term, we use the  estimate \eqref{estimateM} to bound, for any $t$,
\begin{align*}
\| \mathcal{M}^{-1} \mathcal{T}_{2}  \| &  \leq 2 \Bigl( \| r \wedge \ell  \| + \bigl(\min (\lambda_{i} )\bigr)^{-1} \| (\mathcal{J}_0 r) \wedge r  \| \ \Bigr) \\
& \leq C \Bigl( \|  \ell \|^2  + \|r\|^{2} + \bigl(\min (\lambda_{i} )\bigr)^{-1}  (\mathcal{J}_0 r) \cdot r  \Bigr ) ,
\end{align*}
thanks to \eqref{cdotVSwedge}. We thus deduce from the energy bound that the last term of the right hand side of \eqref{AM2new} also vanishes in $L^{2}  ( 0,T  ; \R^{6})$
 in the infinite inertia limit.

The proof of Theorem \ref{NSBodyWeakMassless} is then complete.
\end{proof}

\section{Smooth local-in-time solutions of the inviscid system}
\label{Sl}

In this section we consider the system ``inviscid incompressible fluid + rigid body''.

\subsection{The system ``inviscid incompressible fluid + rigid body'' }

When the viscosity coefficient $\nu$ is set equal to $0$, formally, the system \eqref{NS1}-\eqref{Solideci} degenerates into the following equations:
\begin{gather}
\label{Euler1}
\frac{\partial U^E}{\partial t} + (U^E \cdot\nabla) U^E  + \nabla P^E  =   0   \ \text{ for } \ x \in \mathcal{F}^E  (t) = \mathbb{R}^3 \setminus
\mathcal{S}^E(t), \\
\label{Euler2}
\div U^E  = 0 \ \text{ for } \  x \in \mathcal{F}^E  (t) ,  \\
\label{Euler3}
U^E  \cdot n =  U^E_{{\mathcal S}}   \cdot n  \ \text{ for } \ x\in \partial \mathcal{S}^E   (t),  \\
\label{EulerSolide1}
m   (h^E) ''  =   \int_{ \partial \mathcal{S}^E  (t)}  P^E  n \, ds ,  \\
\label{EulerSolide2}
(\mathcal{J}^E R^E   )' =    \int_{ \partial   \mathcal{S}^E (t)}  P^E  (x-  h^E(t) ) \wedge n \, ds    , \\
\label{Eulerci2}
U^E  |_{t= 0} = U^E_0 , \\
\label{EulerSolideci}
h^E  (0)=  0 , \ (h^E )' (0)= \ell^{E}_0 ,\ R^{E}  (0)=  r^{E}_0 ,
\end{gather}
where the solid velocity is given by
\begin{equation*}
U^E_{{\mathcal S}}(t,x) := (h^E) '(t)  +  R^E (t)  \wedge  (x-h^E(t)) ,
\end{equation*}
and
\begin{equation*}
\mathcal{S}^E   (t) := \eta^E (t,\cdot) (\mathcal{S}_0 ) ,\text{ with } \eta^E (t,x) := h^E (t) + Q^E (t) x ,
\end{equation*}
where the matrix $ Q^E$ solves the  differential equation
\begin{equation*}
 (Q^E)' x   = R^E  \wedge (Q^E x) \ \text{ with } \
 Q^E (0)x = x , \, \text{ for any } x \in \mathbb{R}^3.
 \end{equation*}
  Finally $ \mathcal{J}^E$ is given by
\begin{equation*}
 \mathcal{J}^E = Q^E \mathcal{J}_0 (Q^E)^{T} .
 \end{equation*}

Observe that we prescribe $h^E  (0)=  0$ so that the initial position $ \mathcal{S}^E (0)$ occupied by the solid also starts from $ \mathcal{S}_0$ at $t=0$.
The mass $m$ and the initial inertial matrix $ \mathcal{J}_0 $ are also the same than in the previous case of the Navier-Stokes equations.
\par
\  \par

Let us emphasize that in the boundary condition \eqref{Euler3} there  is only an impermeability condition, the slip-with-friction condition is no more prescribed.
This loss of boundary condition generates a boundary layer which makes difficult the issue of the inviscid limit of the system since the fluid flow is drastically modified in a neighborhood of the body's boundary of thickness proportional to $\sqrt{\nu}$.
 The main goal of the paper is precisely to show that despite these layers  the solution  of \eqref{NS1}-\eqref{Solideci} converges in a rather good manner to the solution  of \eqref{Euler1}-\eqref{EulerSolideci} as $\nu \rightarrow  0$.
This will be achieved in the next section. Here we will first gather a few results about the inviscid system \eqref{Euler1}-\eqref{EulerSolideci}.

\subsection{A change of variables}

To write the system  in a fixed domain, we perform the following change of coordinates:
\begin{eqnarray*}
\ell^E (t) :=   Q^E(t)^T \,   (h^E )' (t)  ,  \, R^E(t) := Q^E(t)r^E(t) ,  \,
\\ u^E (t,x) := Q^E (t)^T \,  U^E(t, Q^E(t) x+h^E(t))
\text{ and }  p^E(t,x) := P^E(t, Q^E(t) x+ h^E(t)) ,
\end{eqnarray*}
where $Q^E(t)$ is   the rotation matrix associated to the motion of ${\mathcal{S}}^E (t)$ defined in the previous section.

Observe that this change of variable is analogous to the one that we have used for the Navier-Stokes equations in Section \ref{changeNS}.

The system  \eqref{Euler1}-\eqref{EulerSolideci}   now reads
\begin{gather}
\label{chEuler1}
\frac{\partial u^E }{\partial t} +  ( u^E  - u^E_\mathcal{S} ) \cdot\nabla  u^E  +  r^E  \wedge u^E  + \nabla p^E  =  0 \ \text{ for } \ x \in \mathcal{F}_0  , \\
\label{chEuler2}
\div u^E  = 0 \ \text{ for } \  x \in \mathcal{F}_0  ,  \\
\label{chEuler3}\
u^E  \cdot n = u^E_\mathcal{S}   \cdot n  \ \text{ for } \ x\in \partial \mathcal{S}_0  ,  \\
\label{chEulerSolide1}
m   (\ell^E)' =    \int_{ \partial \mathcal{S}_0 } p^E  n \, ds + (m\ell^E ) \wedge  r^E ,  \\
\label{chEulerSolide2}
\mathcal{J}_0   (r^E)'   =   \int_{ \partial   \mathcal{S}_0} p^E x \wedge
n \, ds  +    ( \mathcal{J}_0  r^E) \wedge r^E  , \\
\label{chEulerci2}
u^E  |_{t= 0} = u^E _0 , \\
\label{chEulerSolideci}
h^E  (0)= 0 , \ (h^E) ' (0)= \ell^E_0 , \ r^E  (0)=  r^E_0 ,
\end{gather}
with
\begin{equation}
\label{LastReadingE}
  u^E_{\mathcal{S}} (t,x) := \ell^E  (t)  + r^E (t) \wedge  x.
   \end{equation}

\subsection{Smooth local-in-time solutions}

Let us recall the following result  from \cite{KatoBody}
about the existence and uniqueness of classical solutions to the equations \eqref{chEuler1}-\eqref{chEulerSolideci}, where, as previously, we extend the initial data  $u^E_{0}$ by setting
 $u^E_{0} := \ell^E_{0} + r^E_{0} \wedge x $ for $x \in \mathcal{S}_0 $.
\begin{Theorem}
\label{EulerBodyStrong}
Let be given  $\lambda \in (0,1)$ and $u^E_{0} \in  \mathcal{H}$ such that $ u^E_{0} |_{{\mathcal{F}_0}} \in  H^{1} \cap C^{ 1,\lambda} $ and $\curl u^E_{0}|_{{\mathcal{F}_0}} $ is compactly supported.
Then there exist $T >0$ and a unique solution $u^E $ of \eqref{chEuler1}-\eqref{chEulerSolideci} in $  C^{1} ( [0,T ]  ;  \mathcal{H})$ such that
$(\nabla u^E ) |_{ [0,T ]  \times {\mathcal{F}_0}} \in    C ( [0,T ]  ;  L^2 ({\mathcal{F}_0} , (1+ | x |^2 )^\frac12  dx )) \cap  C_{w*} ( [0,T ]  ; C^{0,\lambda} ({\mathcal{F}_0}))$.
Moreover for any $t \in [0,T ]$,
\begin{equation}
\label{EulerBodyStrongEnergy}
 \| u^E  (t, \cdot)  \|^2_{\mathcal{H}} =  \|  u^E_{0}  \|^2_{\mathcal{H}}  .
\end{equation}
\end{Theorem}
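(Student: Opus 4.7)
The approach I would take is a Picard-type iteration scheme based on the vorticity formulation, coupled with an ODE for the rigid body. First, setting $\omega^E := \curl u^E$ and taking the curl of \eqref{chEuler1}, the pressure disappears and one obtains a transport--stretching equation of the form
\begin{equation*}
\partial_t \omega^E + ((u^E - u^E_\mathcal{S}) \cdot \nabla) \omega^E = (\omega^E \cdot \nabla) u^E + R(r^E, \omega^E, u^E),
\end{equation*}
where $R$ collects the bounded linear terms produced by $\curl(r^E \wedge u^E) = -(r^E \cdot \nabla) u^E$ and by the skew-symmetric part of $\nabla u^E_\mathcal{S}$. Because of \eqref{chEuler3}, the transport field $u^E - u^E_\mathcal{S}$ is tangent to $\partial \mathcal{S}_0$, so its flow preserves $\overline{\mathcal{F}_0}$ and the compact support of the initial vorticity is propagated; standard transport estimates then yield propagation of the $C^{0,\lambda}$ bound on $\omega^E$ along this flow.

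Next I would reconstruct the velocity from the vorticity. For given $\omega \in C^{0,\lambda} \cap L^2$ with compact support in $\overline{\mathcal{F}_0}$ and $(\ell,r) \in \R^3 \times \R^3$, there exists a unique $u \in \mathcal{H}$ with $u|_{\mathcal{S}_0} = \ell + r \wedge x$, $\curl u = \omega$ and $\div u = 0$ in $\mathcal{F}_0$, $u \cdot n = u_\mathcal{S} \cdot n$ on $\partial \mathcal{S}_0$, and appropriate decay at infinity. The solution is obtained by a Biot--Savart convolution corrected by a suitable linear combination of the Kirchhoff potentials $\Phi_i$ from Section \ref{regproperty}, and Schauder estimates give a bound on $\|\nabla u\|_{C^{0,\lambda}(\mathcal{F}_0)}$ and on $\|\nabla u\|_{L^2(\mathcal{F}_0, (1+|x|^2)^{1/2} dx)}$ in terms of $\|\omega\|_{C^{0,\lambda} \cap L^2 \cap L^1} + |\ell| + |r|$. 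In parallel, testing the momentum equation against each $v_i$ from Proposition \ref{added}, which is allowed since the viscous bulk term is now absent, yields a body ODE
\begin{equation*}
\mathcal{M} \begin{bmatrix} \ell^E \\ r^E \end{bmatrix}' = \bigl(b(u^E, u^E, v_i)\bigr)_{i \in \{1,\ldots,6\}},
\end{equation*}
with $\mathcal{M}$ the positive definite, time-independent virtual inertia tensor and the right-hand side quadratic in $u^E$.

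I would then close the loop by a Picard iteration in, for instance, the space $C([0,T]; C^{1,\lambda}(\overline{\mathcal{F}_0}) \cap H^1(\mathcal{F}_0)) \times C^1([0,T]; \R^6)$, restricted to velocities whose vorticity is compactly supported in a fixed neighborhood of $\text{supp}(\curl u_0^E)$. For $T$ small enough depending on the norms of the initial data, routine estimates show that the map is a contraction, which produces existence. Uniqueness at this regularity follows by subtracting two solutions and running a Gr\"onwall argument on a combined norm of the vorticity difference and of the body-velocity difference, using Lipschitz dependence of the Biot--Savart reconstruction. The energy identity \eqref{EulerBodyStrongEnergy} is finally obtained by applying the inviscid analogue of \eqref{WeakNS} (with $\nu = 0$, so no $a$ term) to $v = u^E$ itself, which is admissible thanks to the regularity just established, and using the cancellation $b(u^E, u^E, u^E) = 0$ from \eqref{Tonga}.

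The main obstacle is the nonlocal coupling between body motion and fluid rotation through the reconstruction map: $u^E$ depends on $(\ell^E, r^E)$ through a global elliptic problem, while the body ODE depends on $u^E$ through integrals on $\partial \mathcal{S}_0$. Making the iteration contract in a norm compatible with both the $C^{1,\lambda}$ control of the fluid and the $C^1$ control of the body requires the virtual inertia tensor to be non-degenerate (Proposition \ref{added}) and a careful balancing of the H\"older norms of $\omega^E$ against the rigid-body traces on $\partial \mathcal{S}_0$.
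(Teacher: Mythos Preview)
This theorem is not proved in the present paper; it is quoted from \cite{KatoBody}, and the paper only records, for later use, that the proof there relies on the vorticity reformulation \eqref{ExiOmega}--\eqref{AM3}. Your proposal follows exactly that route---transport--stretching for $\omega^E$, div--curl/Biot--Savart reconstruction with Kirchhoff corrections, the body ODE \eqref{AM3} via the virtual inertia tensor, and a Picard/Gr\"onwall closure in H\"older regularity---so it matches the approach the paper attributes to \cite{KatoBody}. One minor remark: your vorticity equation with the remainder $R$ is equivalent to the paper's cleaner form \eqref{ExiOmega}, since $(\omega^E\cdot\nabla)u^E_\mathcal{S}=r^E\wedge\omega^E$ and $\curl(r^E\wedge u^E)=-(r^E\cdot\nabla)u^E$ combine to give the stretching term $(\omega^E\cdot\nabla)(u^E-u^E_\mathcal{S})$ without any extra $R$.
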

Here, $C^{k,\lambda} ({\mathcal{F}_0}), \ k \in \mathbb{N}, \lambda \in (0,1)$, denotes the usual H\"older space.
\\

A few comments are in order.

 It is worth to point out here that the solution $u^E $ given by Theorem
\ref{EulerBodyStrong} satisfies the following property: for any $t \in [0,T ]$, for any  $v \in \underline{\mathcal{V}}$,  there holds
\begin{equation}
\label{chEulerweak}
  (\partial_{t}  u^E ,  v)_{\mathcal{H}}  =  - b(u^E,v,u^E )    .
\end{equation}
To see that,  multiply  \eqref{chEuler1} by $v $ and integrate by parts in space using \eqref{chEuler2}-\eqref{chEuler3}.

It is useful for the sequel to recall that the proof given in  \cite{KatoBody} of Theorem
\ref{EulerBodyStrong} relies on the following vorticity reformulation of  \eqref{chEuler1}-\eqref{chEulerSolideci}:
\begin{align}
\label{ExiOmega}
& \partial_{t} \omega^E +  (u^E - u^E_\mathcal{S} ) \cdot \nabla \omega^E =  ( \omega^E \cdot \nabla) (u^E - u^E_\mathcal{S} )\ \text{ in } [0,T] \times  {\mathcal F}_0 ,
\\ \label{Eq:vtransporte}
& \left\{ \begin{array}{l}
\curl {u}^E = {\omega}^E\ \text{ in } [0,T] \times {\mathcal F}_0 , \\
\div {u}^E = 0 \ \text{ in } [0,T] \times {\mathcal F}_0 , \\
{u}^E \cdot n = u^E_\mathcal{S} \cdot n \ \text{ on } [0,T] \times \partial {\mathcal S}_0 ,  \\
 u^E \rightarrow 0 \quad  \text{for}  \ |x| \rightarrow  \infty,
\end{array} \right.
\\&  u^E_\mathcal{S} (t,x) := \ell^E (t) + r^E (t) \wedge x \text{ in } [0,T] \times \R^3 ,
\\ \label{AM3}
  &  \mathcal{M}  \begin{bmatrix} \ell^E \\ r^E \end{bmatrix} '
   = (  b(u^E ,u^E , v_i) )_{i \in \{1,\ldots,6\}}  ,
\end{align}
where $ \mathcal{M} $ and $ v_i $ were introduced in Section \ref{regproperty}.

The vorticity equation \eqref{ExiOmega} can easily be inferred from \eqref{chEuler1},\eqref{chEuler2} and \eqref{LastReadingE} whereas
\eqref{AM3}  is obtained from \eqref{Tonga}, \eqref{chEulerweak} and some integration by parts.
Observe that \eqref{AM3} can be seen as the inviscid counterpart of \eqref{AM2}, and recall that  the $b(u^E ,u^E , v_i)$ can be computed thanks to the formula \eqref{ouf}.

\subsection{The infinite inertia limit}
\label{SectionInertiaEuler}

The following result  is the counterpart of Theorem \ref{NSBodyWeakMassless} for the inviscid system \eqref{chEuler1}-\eqref{chEulerSolideci}.
It shows that in  the limit of infinite inertia, that is, when $m$ and the eigenvalues $(\lambda_i )_{i =1,2,3} $ of $\mathcal{J}_0 $ converge to $+\infty$ with $\lambda_i = O(\lambda_j )$ for any $i,j$,
the system \eqref{chEuler1}-\eqref{chEulerSolideci} degenerates into the following classical Euler equations in $ \mathcal{F}_0$:
\begin{gather}
\label{chEuler1m}
\frac{\partial u^E }{\partial t} +   u^E \cdot\nabla  u^E  + \nabla p^E  =  0 \ \text{ for } \ x \in \mathcal{F}_0  , \\
\label{chEuler2m}
\div u^E  = 0 \ \text{ for } \  x \in \mathcal{F}_0  ,  \\
\label{chEuler3m}\
u^E  \cdot n = 0 \ \text{ for } \ x\in \partial \mathcal{S}_0  ,  \\
\label{chEulerci2m}
u^E  |_{t= 0} = u^E _0 .
\end{gather}
\begin{Theorem}
\label{EBodyWeakMassless}
Let be given  $\lambda \in (0,1)$ and $u^E_{0} \in  \mathcal{H}$ such that $\ell^E_{0} = r^E_{0} = 0$, $ u^E_{0} |_{{\mathcal{F}_0}} \in  H^{1} \cap C^{ 1,\lambda} $ and $\curl u^E_{0}|_{{\mathcal{F}_0}} $ is compactly supported.
Let be given  $\underline{m} >0$ and $\beta > 0$.

Then there exists $T>0$ such that for any  $m \geq \underline{m}$ and for any symmetric positive $3 \times 3$ matrix $ \mathcal{J}_0$ with eigenvalues $ (\lambda_i)_{i=1,2,3} $ satisfying  $\lambda_i \geq \beta$, the corresponding  solution $u^E$ of  \eqref{chEuler1}-\eqref{chEulerSolideci} given by Theorem \ref{EulerBodyStrong} is defined up to the time $T$.

Moreover in the infinite inertia limit,  $u^E \vert_{\mathcal{F}_0}$ converges  in $  L^{\infty}( 0,T;  C^{1,\tilde{\lambda}} (\mathcal{F}_0  )  )$, for any $\tilde{\lambda} \in (0,\lambda)$, to the unique smooth  solution of  \eqref{chEuler1m}-\eqref{chEulerci2m}  and $\ell^E $ and $r^E$ converge to $0$ in $C^{1}  ( 0,T  ; \R^{3})$.
\end{Theorem}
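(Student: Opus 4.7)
The plan is to adapt the strategy used for Theorem \ref{NSBodyWeakMassless} to the inviscid setting, working with the vorticity formulation \eqref{ExiOmega}-\eqref{AM3} to get bounds uniform in the inertia parameters. Two genuinely new difficulties with respect to the Navier-Stokes case are (i) that the lifespan of smooth Euler solutions depends on nonlinear norms of the data, so uniformity of $T$ requires a closing bootstrap, and (ii) that the convergence must be strong in $C^{1,\tilde\lambda}$ rather than merely weak in an energy space.

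First I would set up a bootstrap. Choose $M_0 > 0$ bigger than $\|\omega_0^E\|_{L^\infty \cap C^{0,\lambda}(\mathcal{F}_0)}$ and the diameter of $\operatorname{supp}\omega_0^E$, and assume on some interval $[0,T']$ the a priori bounds
\begin{equation*}
\|\ell^E\|_{L^\infty} + \|r^E\|_{L^\infty} \leq \varepsilon, \qquad \|\omega^E(t)\|_{L^\infty \cap C^{0,\lambda}(\mathcal{F}_0)} \leq 2 M_0 .
\end{equation*}
Under these hypotheses, the div-curl system \eqref{Eq:vtransporte} is an exterior elliptic problem with boundary data of size $\varepsilon$, and standard estimates (combined with the conservation of energy \eqref{EulerBodyStrongEnergy}) give a uniform bound of $\|u^E\|_{C^{1,\lambda}(\mathcal{F}_0)}$ and of $\|\nabla(u^E - u_\mathcal{S}^E)\|_{L^\infty}$ in terms of $M_0$ and $\varepsilon$. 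The transport--stretching equation \eqref{ExiOmega} then yields, in a classical way, a time $T_1 > 0$ depending only on $M_0$ (for $\varepsilon \leq 1$) over which the vorticity bound $2M_0$ can be propagated and the support of $\omega^E$ remains compact. Standard propagation of Hölder regularity closes the bound on $\|\omega^E\|_{C^{0,\lambda}}$.

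The most delicate point is controlling $(\ell^E, r^E)$ uniformly in inertia. Using \eqref{AM3} together with the explicit decomposition \eqref{ouf}, the right-hand side splits into (a) integral terms bounded by $C M_0^2$ uniformly, and (b) the gyroscopic terms $m\, r^E \wedge \ell^E$ and $(\mathcal{J}_0 r^E) \wedge r^E$. Applying \eqref{estimateM} and \eqref{cdotVSwedge} just as at the end of the proof of Theorem \ref{NSBodyWeakMassless}, the contribution of (b) to $\mathcal{M}^{-1}$ applied to the right-hand side is bounded by $C (\|\ell^E\|^2 + \|r^E\|^2 + (\min \lambda_i)^{-1} (\mathcal{J}_0 r^E) \cdot r^E)$, while the contribution of (a) is bounded by $C (m^{-1} + \|\mathcal{J}_0^{-1}\|) M_0^2$. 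Since $\ell_0^E = r_0^E = 0$, a Grönwall-type integration over $[0,\min(T',T_1)]$ delivers
\begin{equation*}
\|\ell^E(t)\| + \|r^E(t)\| \leq C_1 (m^{-1} + \|\mathcal{J}_0^{-1}\|),
\end{equation*}
with $C_1$ depending only on $M_0$, $\underline{m}$ and $\beta$. Taking the inertia large enough makes the right-hand side strictly less than $\varepsilon$, closing the bootstrap with $T := T_1$ independent of $(m, \mathcal{J}_0)$ in the admissible range. This gives the uniform lifespan asserted in the theorem.

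Finally, for the convergence, the uniform $C([0,T]; C^{0,\lambda})$ bound on $\omega^E$ gives via the elliptic estimate a uniform $C([0,T]; C^{1,\lambda}(\mathcal{F}_0))$ bound on $u^E$, and the transport equation \eqref{ExiOmega} provides equicontinuity in time. Arzelà--Ascoli yields a subsequence converging in $C([0,T]; C^{1,\tilde\lambda}(\mathcal{F}_0))$ for any $\tilde\lambda < \lambda$ to some limit $u^*$. Simultaneously, $(\ell^E, r^E) \to 0$ in $C^1([0,T]; \R^3)$: the $C^0$ bound follows from the Grönwall estimate above, and the $C^0$ bound on the derivatives follows from going back to \eqref{AM3}--\eqref{ouf} and observing that $\|\mathcal{M}^{-1}\| \to 0$ while the bracketed terms remain bounded, once the gyroscopic part is handled via \eqref{cdotVSwedge}. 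Passing to the limit in \eqref{chEuler1}--\eqref{chEulerSolideci} and using that $u_\mathcal{S}^E \to 0$ uniformly, the limit $u^*$ solves \eqref{chEuler1m}-\eqref{chEulerci2m}. The uniqueness of classical Euler solutions with compactly supported Hölder vorticity in an exterior domain ensures that it is not only a subsequence but the entire family that converges.

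The main obstacle is the closure of the bootstrap for the uniform lifespan; it works only because the assumption $\ell_0^E = r_0^E = 0$ makes $u_\mathcal{S}^E$ a small perturbation throughout $[0,T]$, so that the Euler dynamics on the (almost) frozen domain controls everything.
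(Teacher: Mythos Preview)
Your strategy is broadly right and the convergence part matches the paper closely (the paper uses Aubin--Lions where you invoke Arzel\`a--Ascoli, but this is cosmetic). However, you miss the key simplification that drives the paper's uniform-lifespan argument, and as a result your bootstrap has a gap.

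The paper does \emph{not} bootstrap on $(\ell^E,r^E)$. Instead it reads off directly from the energy identity \eqref{EulerBodyStrongEnergy}: since $\ell_0^E=r_0^E=0$, one has
\[
m\,|\ell^E(t)|^2 + \mathcal{J}_0 r^E(t)\cdot r^E(t) \;\leq\; \|u_0^E\|_{L^2(\mathcal{F}_0)}^2,
\]
hence $|\ell^E(t)|\leq \|u_0^E\|_{L^2}/\sqrt{\underline m}$ and $|r^E(t)|\leq \|u_0^E\|_{L^2}/\sqrt{\beta}$, uniformly over the \emph{entire} range $m\geq\underline m$, $\lambda_i\geq\beta$. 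With these bounds in hand, the transport estimate \eqref{transU} and the elliptic estimate \eqref{EstLL} combine into a closed Gr\"onwall inequality for $\|\omega^E\|_{C^{0,\lambda}}$ alone, with constants depending only on $\underline m,\beta$ and the data. This yields the uniform $T$ with no bootstrap at all.

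Your argument, by contrast, controls $(\ell^E,r^E)$ through the ODE \eqref{AM3} and concludes that the bootstrap closes ``taking the inertia large enough''. But the theorem asserts a common lifespan $T$ for \emph{all} $m\geq\underline m$, $\lambda_i\geq\beta$, not just for large inertia; your closing step does not deliver this. (Trying to patch it by choosing $\varepsilon$ after the fact runs into a circularity, since your $C_1$ depends on the vorticity bound which in turn depends on $\varepsilon$.) The fix is precisely the paper's: use the energy identity to bound $(\ell^E,r^E)$ a priori and drop the bootstrap on them. Once you do that, your remaining steps---including the $C^1$ convergence of $(\ell^E,r^E)$ via \eqref{AM3}, \eqref{ouf}, \eqref{estimateM}, \eqref{cdotVSwedge}---coincide with the paper's.
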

\begin{proof}
The key issue here is to obtain some estimates uniform in the infinite inertia limit. Let us stress in particular that, a priori, Theorem \ref{EulerBodyStrong} only provides, for some inertia $m$ and $\mathcal{J}_0$,  the existence of a smooth solution on a time interval $[0, T_{m,\mathcal{J}_0}]$ which may shrink when $m$ and the eigenvalues $(\lambda_i )_{i =1,2,3} $ of $\mathcal{J}_0$ go to infinity.
However we are going to see below that this is not the case. Actually revisiting  the proof of  Theorem \ref{EulerBodyStrong} given in  \cite{KatoBody} we will provide a time $T$ common to any large enough inertia.

The first basic observation in this direction is that according to the energy identity \eqref{EulerBodyStrongEnergy}, and because of the choice of the initial data (which are somehow ``well-prepared'') the energy $ \| u^E  (t, \cdot)  \|^2_{\mathcal{H}}$ does not depend on time nor on the inertia:
\begin{equation}
\label{eneunfi}
\int_{{\mathcal F}_{0}} | u^E  (t, \cdot) |^2 dx  + m \ell^{E} (t)\cdot \ell^{E} (t) + \mathcal{J}_{0} r^{E} (t)  \cdot  r^{E} (t)  = \int_{{\mathcal F}_{0}} | u^E _{0} |^2 dx  .
\end{equation}

Let us now show how to obtain a uniform estimate of the velocity in $L^{\infty}(0,T; C^{1,\lambda}({\mathcal F}_{0}))$ for some $T>0$, thanks to the vorticity formulation \eqref{ExiOmega}-\eqref{AM3}.

By standard transport estimates (cf. \cite{gamblin2}, Corollary 2.4) one infers from \eqref{ExiOmega} the estimate
\begin{equation}
\label{transU}
\begin{split}
\| \omega^E (t)\|_{ C^{0,\lambda}({\mathcal F}_{0}) } & \leq  \| \omega_0^E \|_{C^{0,\lambda}({\mathcal F}_{0}) }
\, \exp \Big( C \int_0^t \bigl(  \| u^E \|_{C^{1,\lambda}({\mathcal F}_{0}) } (s)  + \| r^E \| (s) \bigr) ds    \Big) \\
& \leq  \| u_0^E \|_{C^{1,\lambda}({\mathcal F}_{0}) } \exp(C\| r^E \|_{L^\infty(0,T)} t)
\, \exp \Big( C \int_0^t  \| u^E \|_{C^{1,\lambda}({\mathcal F}_{0}) } (s) ds      \Big),
\end{split}
\end{equation}
where $C$  does not depend on the inertia.

On the other hand by classical elliptic theory,  one infers from  \eqref{Eq:vtransporte} the following estimate, where time is omitted,
\begin{equation}
\label{EstLL}
\| u^E  \|_{C^{1,\lambda}({\mathcal F}_{0}) } \leq C  (     \| \omega^E \|_{C^{0,\lambda}({\mathcal F}_{0})} + \| \ell^E \| + \| r^E \| ) \leq
C ( \| \omega^E \|_{C^{0,\lambda}({\mathcal F}_{0})} +  \|  u^E_0 \|_{L^2 ({\mathcal F}_{0})}) ,
\end{equation}
with $C>0$ depending on  $\underline{m} >0$ and $\beta > 0$.

Plugging \eqref{EstLL} in \eqref{transU} yields,
\begin{equation*}
\| \omega^E (t)\|_{ C^{0,\lambda}({\mathcal F}_{0}) }
 \leq  \| u_0^E \|_{C^{1,\lambda}({\mathcal F}_{0}) } \exp(C\|  u^E_0 \|_{L^2 ({\mathcal F}_{0})} t)
\, \exp \Big( C \int_0^t  \| \omega^E \|_{C^{0,\lambda}({\mathcal F}_{0}) } (s) ds      \Big),
\end{equation*}
with $C>0$ depending on  $\underline{m} >0$ and $\beta > 0$, from which one deduces the existence of a small time $T>0$ and an estimate of  velocity in $L^{\infty}(0,T; C^{1,\lambda}({\mathcal F}_{0}))$ both uniformly for any  $m \geq \underline{m}$ and for any symmetric positive $3 \times 3$ matrix $ \mathcal{J}_0$ with eigenvalues $ (\lambda_i)_{i=1,2,3} $ satisfying  $\lambda_i \geq \beta$.

Let us now prove the second part of Theorem \ref{EBodyWeakMassless} about convergence of the fluid and solid velocities.
First  \eqref{eneunfi} yields that  $\ell^E $ and $r^E$ converge to $0$ in $L^{\infty}  ( 0,T  ; \R^{3})$ in the infinite inertia limit.
Then, similarly as we proceed in the proof of Theorem \ref{NSBodyWeakMassless}, by using \eqref{AM3} and \eqref{ouf} we write
\begin{equation*}
   \begin{bmatrix} \ell^E \\ r^E \end{bmatrix} '
   =  \mathcal{M}^{-1} (  \mathcal{T}_{1,i}^E)_{i \in \{1,\ldots,6\}}   + \mathcal{M}^{-1} \mathcal{T}_{2}^E  .
\end{equation*}
where,
\begin{equation*}
\mathcal{T}_{1,i}^E  := \int_{\mathcal{F}_0 } \Big(     [    \big( u^E -  u_\mathcal{S}^E )  \cdot\nabla   \big) \nabla \Phi_i] \cdot u^E   -  \det (r_u^E , u^E ,   \nabla \Phi_i ) \Big) ,
\text { and } \mathcal{T}_{2}^E :=
 \begin{bmatrix} m \, r^E \wedge \ell^E  \\ ( \mathcal{J}_0 r^E ) \wedge r^E \end{bmatrix} .
\end{equation*}
Thus, by using again the energy bound \eqref{eneunfi}, we deduce that   $\ell^E $ and $r^E$ converge to $0$ in $C^{1}  ( 0,T  ; \R^{3})$.

Finally it remains to prove that $u^{E} \vert_{\mathcal{F}_0}$ converges, up to a subsequence,  in $  L^{\infty}( 0,T;  C^{1,\tilde{\lambda}} (\mathcal{F}_0  ) )$, for any $\tilde{\lambda} \in (0,\lambda)$.
Since we have some uniform estimates of $u^{E} \vert_{\mathcal{F}_0}$ in the space $  L^{\infty}( 0,T;  C^{1,\lambda} (\mathcal{F}_0  )  )$, it is sufficient to obtain a temporal estimate uniform in  infinite inertia limit and to use the Aubin-Lions lemma to conclude that a subsequence is converging in $  L^{\infty}( 0,T;  C^{1,\tilde{\lambda}} (\mathcal{F}_0  )  )$, for any $\tilde{\lambda} \in (0,\lambda)$.
This can be easily achieved by using \eqref{chEulerweak} with some test functions $v$ compactly supported in $\mathcal{F}_0$.

These convergences are sufficient to yield that the limit is a smooth  solution of  \eqref{chEuler1m}-\eqref{chEulerci2m} on $[0, T]$.

Since classical solutions of the Euler equations are unique we finally deduce that the whole sequence is converging.
\end{proof}
\section{Inviscid limit}
\label{IL}
Let us now state the main result of this paper.
\begin{Theorem}\label{clasic}
The following holds true.
\begin{enumerate}
\item
With the notations of Theorem \ref{NSBodyWeak}  and Theorem \ref{EulerBodyStrong},
and assuming that $ u_0 $ converges to $ u^E_0 $ in $\mathcal{H}$ when $\nu$ tends to $0$ and that $\alpha := \alpha^\nu $ satisfies
 $\alpha \nu$ converges to $0$ when $\nu$ tends to $0$, then
 $ u $ converges to $ u^E $ in $L^\infty(0,T; \mathcal{H})$,  $ \sqrt{\nu} \| u  \vert_{{\mathcal{F}_0}}\|_{L^2(0,T;H^1 ({\mathcal{F}_0} ))}$ and
  $ \sqrt{\alpha \nu} \| u - u_\mathcal{S}  \|_{L^2( (0,T) \times \partial {\mathcal{S}_0} )}$ converge to $0$, where $T>0 $ is the lifetime of the smooth solution $u^E $ of the inviscid system
\item  Moreover $ (\ell,r) $ converges to $(\ell^E ,r^E )  $ in $H^{1} ( 0,T ;\R^{6} )$.
 \item Assuming  that $ u_0 = u^E_0 $ and that $\alpha >0$ does not depend on $\nu$, then
  there exists $C > 0 $ (depending on $T$) such that
\begin{equation}
\label{clasicesti2}
\|u -u^E\|_{L^\infty(0,T; \mathcal{H})}  + \sqrt{\nu} \|u -u^E\|_{L^2(0,T;H^1 ({\mathcal{F}_0} ))} \leq  C  ( 1 + \alpha ) \nu^{3/4}.
\end{equation}
\end{enumerate}
 \end{Theorem}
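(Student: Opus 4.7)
The plan is to control $w := u - u^E$ by an energy argument, using $u^E$ as a test function. Since $u^E \in L^\infty(0,T; \widehat{\mathcal{V}})$ by Theorem~\ref{EulerBodyStrong}, it is admissible in the weak formulation \eqref{WeakNS}, and symmetrically $u$ is admissible in the Euler identity \eqref{chEulerweak}. Combining the resulting expression for $\frac{d}{dt}(u, u^E)_{\mathcal{H}}$ with the energy inequality for $u$ and the energy conservation for $u^E$, and using the cancellation
\[
b(u^E, u^E, u) + b(u, u, u^E) = b(w, w, u^E)
\]
(a consequence of the antisymmetry in the last two arguments implied by \eqref{Tonga}), one obtains
\[
\frac{d}{dt}\|w\|_{\mathcal{H}}^2 + 4\nu\|D(w)\|_{L^2(\mathcal{F}_0)}^2 + 4\alpha\nu\|w-w_{\mathcal{S}}\|_{L^2(\partial\mathcal{S}_0)}^2 \leq 4\nu a(u^E,w) - 2b(w,w,u^E).
\]
The trilinear term is controlled by $C\|u^E\|_{\widehat{\mathcal{V}}}\|w\|_{\mathcal{H}}^2$ via \eqref{bcontiE}. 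The viscous residual $4\nu a(u^E,w)$ splits into a volume part $-4\nu\int_{\mathcal{F}_0} D(u^E):D(w)$, absorbed by Young's inequality into $2\nu\|D(w)\|_{L^2}^2$ plus $C\nu$, and a boundary part $-4\alpha\nu\int_{\partial \mathcal{S}_0}(u^E-u^E_{\mathcal{S}})\cdot(w-w_{\mathcal{S}})$, absorbed into $2\alpha\nu\|w-w_{\mathcal{S}}\|_{L^2(\partial\mathcal{S}_0)}^2$ plus $C\alpha\nu$. Gronwall's inequality then yields $\|w(t)\|_{\mathcal{H}}^2 \leq e^{CT}(\|w(0)\|_{\mathcal{H}}^2 + C(1+\alpha)\nu T)$, which implies assertion (1) under $u_0 \to u_0^E$ in $\mathcal{H}$ and $\alpha\nu \to 0$; the dissipative terms on the left vanish in the stated norms.

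For part (2), Proposition~\ref{added} furnishes the ODE $\mathcal{M}(\ell,r)' = (2\nu a(u, v_i) + b(u,u,v_i))_{i=1,\ldots,6}$, whereas \eqref{AM3} gives $\mathcal{M}(\ell^E,r^E)' = (b(u^E, u^E, v_i))_i$. Subtracting, and using that $\mathcal{M}$ is a fixed invertible matrix and $v_i \in \widehat{\mathcal{V}}$, the difference $(\ell-\ell^E, r-r^E)'$ is bounded in $L^2(0,T;\mathbb{R}^6)$ by $C\|u^E\|_{L^\infty(\widehat{\mathcal{V}})}\|w\|_{L^\infty(\mathcal{H})}$ (coming from $b(u,u,v_i) - b(u^E,u^E,v_i)$ via \eqref{bcontiE}) plus $C\nu\|u\|_{L^2(\underline{\mathcal{V}})}$ (from the viscous term via \eqref{Conta}), both of which vanish by (1) and by the energy bound $\sqrt{\nu}\|u\|_{L^2(\underline{\mathcal{V}})}\leq C$. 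Combined with the $L^\infty$ convergence of $(\ell,r)$ this upgrades the convergence to $H^1(0,T)$.

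The main obstacle is the optimal rate $\nu^{3/4}$ in (3): the naive estimate above only yields $O(\sqrt{(1+\alpha)\nu})$, the loss originating in the boundary mismatch between the Navier condition satisfied by $u$ and the mere impermeability satisfied by $u^E$. To close this gap, I would introduce a boundary-layer corrector $u^{bl}_\nu$, supported in a strip of width $\sqrt{\nu}$ around $\partial\mathcal{S}_0$ and of amplitude $O(\sqrt{\nu})$, constructed so that $u^E + u^{bl}_\nu$ satisfies the Navier condition up to a controllable residual, while its divergence-free and tangential structure is preserved modulo lower-order terms. The scaling of such a corrector yields $\|u^{bl}_\nu\|_{L^2(\mathcal{F}_0)} = O(\nu^{3/4})$ and $\sqrt{\nu}\,\|\nabla u^{bl}_\nu\|_{L^2(\mathcal{F}_0)} = O(\nu^{3/4})$, precisely the target orders. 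Redoing the energy estimate with $u^E + u^{bl}_\nu$ in place of $u^E$ and carefully tracking every contribution of $u^{bl}_\nu$ (time derivative, viscous dissipation, trilinear interaction with both $u$ and $u^E$) should then give \eqref{clasicesti2} via Gronwall. The technical heart of this last step is the explicit construction of $u^{bl}_\nu$ adapted to the curved body boundary $\partial\mathcal{S}_0$.
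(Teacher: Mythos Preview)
Your treatment of parts (1) and (2) matches the paper's proof essentially line for line: the energy identity for $w$ obtained by testing the weak formulation with $u^E$ and the Euler identity with $u$, the cancellation $b(u,u,u^E)-b(u^E,u,u^E)=b(w,w,u^E)$, the bound \eqref{bcontiE} on the trilinear term, and Gronwall; likewise the subtraction of \eqref{AM2} and \eqref{AM3} for the $H^1$ convergence of the solid velocities.

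For part (3), however, your route diverges from the paper's. You propose to build a boundary-layer corrector $u^{bl}_\nu$ of width $\sqrt{\nu}$ and amplitude $\sqrt{\nu}$ so that $u^E+u^{bl}_\nu$ satisfies the Navier condition approximately, then redo the energy estimate. This is the strategy of \cite{Iftimie-Sueur} in the fixed-boundary case and would presumably succeed here as well, but the paper takes a shorter, corrector-free path. Starting from the same inequality \eqref{ola}, the paper does \emph{not} estimate $\int_{\mathcal{F}_0} D(u^E):D(w)$ directly by Young (which, as you note, only gives $O(\sqrt{\nu})$); instead it integrates by parts via Lemma~\ref{Useful} to rewrite
\[
-\int_{\mathcal{F}_0} D(u^E):D(w)=\tfrac12\int_{\mathcal{F}_0}\Delta u^E\cdot w
-\int_{\partial\mathcal{S}_0}\big((D(u^E)n)\wedge n\big)\cdot\big((w-w_{\mathcal{S}})\wedge n\big)
-\ell_w\!\cdot\!\int_{\partial\mathcal{S}_0}\!D(u^E)n
-r_w\!\cdot\!\int_{\partial\mathcal{S}_0}\!x\wedge D(u^E)n.
\]
The volume term is now harmless ($\leq C\nu\|w\|_{\mathcal{H}}$), and the boundary terms are handled by a sharp trace/Young inequality (the paper's Lemma~\ref{boundary}):
\[
\|f\|_{L^2(\partial\mathcal{S}_0)}\leq C\gamma^{1/3}\|f\|_{L^2(\mathcal{F}_0)}^{2/3}+\tfrac{1}{4\gamma}\|D(f)\|_{L^2(\mathcal{F}_0)}^2+C\|f\|_{L^2(\mathcal{F}_0)},
\]
applied to $f=w-\tilde w_{\mathcal{S}}$ (with $\tilde w_{\mathcal{S}}$ a compactly supported divergence-free extension of $w_{\mathcal{S}}$). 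Choosing $\gamma$ proportional to $1$ (or to $\alpha$ for the friction term) absorbs the $D(w)$ piece into the dissipation and leaves residuals of size $\nu\|w\|_{\mathcal{H}}^{2/3}$. Gronwall then gives $\|w\|_{L^\infty(\mathcal{H})}^2\leq C(1+\alpha+\alpha^{4/3})\nu\,\|w\|_{L^\infty(\mathcal{H})}^{2/3}$, whence $\|w\|_{L^\infty(\mathcal{H})}\leq C(1+\alpha)\nu^{3/4}$.

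So your corrector plan is not wrong, but it is the harder road: the paper obtains the $\nu^{3/4}$ rate with no corrector at all, only an extra integration by parts (shifting the derivative onto the smooth $u^E$) and an interpolated trace estimate. What your approach would buy is robustness in situations where $\Delta u^E$ is not available in $L^2$; what the paper's approach buys is that, under the stated regularity of $u^E$, no construction on the curved boundary is needed.
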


A few remarks are in order.
\par

Regarding the  first part of Theorem \ref{clasic},
let us first mention again the references \cite{coco,MasmoudiLSR,BGP} for the relevance of considering a friction coefficient depending on the viscosity, as a limit of accommodation boundary condition for the Boltzmann equation.
The first part of Theorem \ref{clasic} extends earlier results  where a fixed boundary was considered, see \cite{Iftimie-Planas,Paddick,BGP,xin}.
Let us point out in particular that it  could be possible to extend the first part of Theorem \ref{clasic} to a  weaker setting, following the analysis of \cite{BGP}.
This will require to extend P.-L. Lions' definition of dissipative solutions of the Euler incompressible equations to the system \eqref{chEuler1}-\eqref{chEulerSolideci},  and to modify the proof of the first part of Theorem \ref{clasic} below following a by now classical method, so-called the relative entropy method or the modulated energy method depending on the context and on the authors.
We choose here to consider smooth solutions of  the system \eqref{chEuler1}-\eqref{chEulerSolideci} in order to keep the unity of the theorem, since the other parts fail in a weaker context.
\par
\  \par

The second part of Theorem \ref{clasic} is perhaps the most surprising. It shows that
the convergence of the body's dynamics is better than the one implied by the convergence of  $ u $  to  $ u^E $ in the energy space $L^\infty(0,T; \mathcal{H})$ stated in the first part.
Indeed the latter provides a convergence  $ (\ell,r) $  to $(\ell^E ,r^E )  $ in $L^{\infty} ( 0,T ;\R^{6} )$. This last result relies on the possibility to compute explicitly, in the present case of the Navier slip conditions,  a well-known phenomenon in the theory of the systems involving an incompressible
flow and a structure, namely the added mass phenomenon, cf. for instance [6, 17].
\par
\  \par

If we focus on the dependence on the viscosity the estimate  \eqref{clasicesti2} says that $u $ converges strongly to $u^E$ in $L^\infty (0,T; L^2  ({\mathcal{F}_0})) $ with a rate of $O(\nu^{3/4})$ and in $L^2 (0,T; H^1  ({\mathcal{F}_0})) $ with a rate of $O(\nu^{1/4})$.  We therefore recover the optimal rate of convergence, with respect to $\nu$, found in \cite{Iftimie-Sueur} in the case where the Navier conditions are prescribed on a fixed boundary.

\par
\  \par

Theorem \ref{clasic} may be useful for controllability  issues, in particular for the global  controllability of the
system. In the case of a fixed boundary a strategy  initiated by Coron in \cite{coron} proved
the global approximate controllability for the 2-D incompressible Navier-Stokes equations with Navier slip boundary
conditions.
His proof relies on another of his earlier results about the global controllability of the incompressible  Euler  equations, see  \cite{coron2}.
This strategy was used later on by Chapouly in  \cite{chapouly} to extend Coron's result into a global null controllability result.

It is therefore possible that a similar strategy could be fruitful in the case of a moving rigid body. Note however that in such a case little is known so far about the controllability of the inviscid system.
Let us mention in that direction the result \cite{CM}  by Chambrion and Munnier in the irrotational case.

\section{Proof of Theorem \ref{clasic}}
\label{proof}

\subsection{Two basic observations}

Let us observe first that  when  $ u $ converges to $ u^E $ in $L^\infty(0,T; \mathcal{H})$ then it follows straightforwardly from \eqref{NSBodyWeakEnergy}, \eqref{EulerBodyStrongEnergy}
and Korn inequality that  $ \sqrt{\nu} \| u  \vert_{{\mathcal{F}_0}}\|_{L^2(0,T;H^1 ({\mathcal{F}_0} ))}$ and
  $ \sqrt{\alpha \nu} \| u - u_\mathcal{S}   \|_{L^2( (0,T) \times \partial {\mathcal{S}_0} )}$ converge to $0$.

Moreover the second part of Theorem \ref{clasic} is quite easy to obtain once the first part has been proved.
Indeed, using that
 $ u $ converges to $ u^E $ in $L^\infty(0,T; \mathcal{H})$, and that $ \sqrt{\nu} \| u \vert_{{\mathcal{F}_0}}\|_{L^2(0,T;H^1 ({\mathcal{F}_0} ))}$ and   $ \sqrt{\alpha \nu} \| u - u_\mathcal{S} \|_{L^2( (0,T) \times \partial {\mathcal{S}_0} )}$
 converge to $0$ as  $\nu$ tends to $0$, \eqref{AM2}, \eqref{AM3} and \eqref{ouf},
%and Lemma \ref{boundary}
we obtain that
$  \mathcal{M}  \begin{bmatrix} \ell \\ r \end{bmatrix} ' $ converges to $ \mathcal{M}  \begin{bmatrix} \ell^E \\ r^E \end{bmatrix} ' $
in $L^2 (0,T)$. We then infer easily the second part.

\subsection{Proof of the first part}

Let us now turn to the proof of the first part.
In the following, $C$ will denote a constant independent of $\nu$
and $ \alpha$ that may change from one relation to another.

Let us also introduce the difference
 $$w:=u-u^E \text{ and similarly for the initial condition } w_0 := u_{0} -  u^E_{0} .$$
In the solid we will use the notation $$w_\mathcal{S} := u_\mathcal{S} - u^E_\mathcal{S} .$$
For any $t \in [0,T ]$, we have, thanks to \eqref{NSBodyWeakEnergy} and \eqref{EulerBodyStrongEnergy},
\begin{equation}
\nonumber \|  w(t,\cdot)    \|_{\mathcal{H}}^{2}  \leq  \| u_{0}  \|_{\mathcal{H}}^{2} + \|  u^E_{0}  \|_{\mathcal{H}}^{2} - 2 (u ,   u^E   )_{\mathcal{H}}  (t)  + 4 \nu  \int_{0}^{t} a(u,u) .
\end{equation}
We now apply \eqref{WeakNS}  to $v = u^E   $  to get
\begin{eqnarray*}
(u,  u^E  )_{\mathcal{H}}  (t) -  (u_{0},  u^E_{0} )_{\mathcal{H}}   &=&
 \int_{0}^{t} \Big[ (u, \partial_{t}   u^E )_{\mathcal{H}}     + 2 \nu a(u,u^E )  +
  b(u,u,u^E )     \Big] ds
  \\ &=& \int_{0}^{t} \Big[     2 \nu a(u,u^E )  +
  b(u,u,u^E )    - b(u^E,u,u^E )    \Big] ds  ,
\end{eqnarray*}
using \eqref{chEulerweak} with $v = u$.

Therefore,
\begin{eqnarray*}
\|  w(t,\cdot)    \|_{\mathcal{H}}^{2}
 &\leq& \|  w_{0}    \|_{\mathcal{H}}^{2} - 2  \int_{0}^{t} \Big[     2 \nu a(u,u^E )  - 2 \nu a(u,u ) +
  b(u,u,u^E )    - b(u^E,u,u^E )    \Big] ds
  \\ &&= \|  w_{0}   \|_{\mathcal{H}}^{2} - 2  \int_{0}^{t} \Big[  -   2 \nu a(u, w )  +
  b( w , w ,u^E )      \Big] ds  .
\end{eqnarray*}
This can be recast as follows:
\begin{equation}
\label{ola}
\|  w(t,\cdot)    \|_{\mathcal{H}}^{2}
+ 4 \alpha \nu \int_{0}^{t}  \int_{\mathcal{\partial S}_0}  | w - w_{\mathcal{S}}|^2
+4\nu  \int_{0}^{t}  \int_{\mathcal{F}_0}  |D(w)|^2
 \leq \|  w_{0}   \|_{\mathcal{H}}^{2} - 2  \int_{0}^{t} \Big[  -   2 \nu a(u^E , w )  +
  b( w , w ,u^E )      \Big] ds  .
\end{equation}

Therefore it only remains to use the Cauchy-Schwarz and Young inequalities, \eqref{bcontiE} and finally the Gronwall Lemma to achieve the first part of the theorem.

\subsection{Proof of the last part}

If one is interested in getting a better rate of convergence (for fixed $\alpha$) a further treatment of the viscous part of the right hand side
of  \eqref{ola} is needed. The underlying idea is somehow to put more derivatives on the inviscid solution $u^E$.

We use Lemma \ref{Useful} to obtain
\begin{equation*}
\begin{split}
a(u^E , w )   = &  -\alpha \int_{\partial \mathcal{S}_0}(u^E -  u^E_{\mathcal{S}} ) \cdot (w- w_{\mathcal{S}}) - \int_{\mathcal{F}_0}
D(u^E)\cdot D(w)
\\  = & - \alpha \int_{\partial \mathcal{S}_0}(u^E -  u^E_{\mathcal{S}} ) \cdot (w- w_{\mathcal{S}})
+ \frac{1}{2} \int_{ \mathcal{F}_0}  \Delta u^E \cdot w
 -  \int_{ \partial \mathcal{S}_0}    \Big( (D(u^E)  n)  \wedge n \Big)  \cdot \Big(  (w-w_\mathcal{S} ) \wedge  n \Big)
 \\ & -  \ell_{w} \cdot \int_{ \partial \mathcal{S}_0 } D(u^E) n \, ds    -  r_{w}  \cdot  \int_{ \partial \mathcal{S}_0 }  x \wedge D(u^E) n \, ds .
\end{split}
\end{equation*}

Thus we infer from \eqref{ola} the following inequality:
\begin{align*}
\|  w(t,\cdot)    \|_{\mathcal{H}}^{2} &
+4 \alpha \nu \int_{0}^{t}  \int_{\partial \mathcal{S}_0}  | w - w_{\mathcal{S}}|^2
+4\nu  \int_{0}^{t}  \int_{\mathcal{F}_0}  |D(w)|^2
 \\  \leq &
 \|  w_{0}    \|_{\mathcal{H}}^{2}
 - 4 \alpha \nu  \int_{0}^{t}  \int_{\partial \mathcal{S}_0} (u^E - u^E_\mathcal{S} ) \cdot   (w - w_{\mathcal{S}} )
 \\ &  +2 \nu \int_{0}^{t}  \int_{\mathcal{F}_0} \Delta u^E \cdot w
  -4\nu  \int_{0}^{t}  \int_{ \partial \mathcal{S}_0}    \Big( (D(u^E)  n)  \wedge n \Big)  \cdot \Big(  (w-w_\mathcal{S} ) \wedge  n \Big)
  \\ &  - 4 \nu  \int_{0}^{t}  \ell_{w} \cdot \int_{ \partial \mathcal{S}_0 } D(u^E) n \, ds
   - 4 \nu  \int_{0}^{t}  r_{w}  \cdot  \int_{ \partial \mathcal{S}_0 }  x \wedge D(u^E) n \, ds
  - 2  \int_{0}^{t}  b( w , w ,u^E )
  \\
    \leq &
 \|  w_{0}    \|_{\mathcal{H}}^{2} + \sum_{i=1}^6 I_i .
\end{align*}

To deal with $I_1$ and $I_3$  we will use the following lemma:
\begin{Lemma}\label{boundary}
There exists $C >0$ such that for any $\gamma >0$ and for any smooth function $f$ divergence
free and tangent to the boundary
\begin{equation*}
\| f \|_{L^2 (\partial \mathcal{S}_0 )}  \leq  C \gamma^{1/3} \| f \|_{L^2 ( {\mathcal{F}_0} )}^{2/3} + \frac{1}{4\gamma }  \| D( f) \|_{L^2 ( {\mathcal{F}_0} )}^2
+ C  \| f \|_{L^2 ( {\mathcal{F}_0} )}.
 \end{equation*}
\end{Lemma}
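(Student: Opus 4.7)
The plan is to reduce the inequality to a trace-plus-Korn estimate of the form $\|f\|_{L^2(\partial\mathcal{S}_0)}^2\leq C_1\|f\|_{L^2(\mathcal{F}_0)}\|D(f)\|_{L^2(\mathcal{F}_0)}+C_2\|f\|_{L^2(\mathcal{F}_0)}^2$, and then to recover the claimed form by one carefully tuned application of Young's inequality with exponents $4/3$ and $4$. The hypotheses (smoothness, divergence-free, and tangency on $\partial\mathcal{S}_0$) are exactly what is needed to make each ingredient work in the exterior domain $\mathcal{F}_0$.

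First I would prove the Agmon-type trace estimate. Fix a bounded neighborhood $\mathcal{U}$ of $\partial\mathcal{S}_0$ in $\overline{\mathcal{F}_0}$, a smooth cutoff $\chi$ equal to $1$ on $\partial\mathcal{S}_0$ and supported in $\mathcal{U}$, and a smooth extension $N$ of the outward unit normal to $\mathcal{F}_0$ into $\mathcal{U}$. Then
\begin{equation*}
\int_{\partial\mathcal{S}_0}|f|^2\,dS=-\int_{\mathcal{F}_0}\operatorname{div}(\chi N\,|f|^2)\,dx=-\int_{\mathcal{F}_0}\operatorname{div}(\chi N)\,|f|^2\,dx-2\int_{\mathcal{F}_0}\chi\,\bigl((N\cdot\nabla)f\bigr)\cdot f\,dx,
\end{equation*}
and the Cauchy--Schwarz inequality together with the boundedness of $\chi, N$ and their derivatives yields $\|f\|_{L^2(\partial\mathcal{S}_0)}^2\leq C\|f\|_{L^2(\mathcal{F}_0)}\|\nabla f\|_{L^2(\mathcal{F}_0)}+C\|f\|_{L^2(\mathcal{F}_0)}^2$.

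Next I would invoke Korn's inequality on $\mathcal{F}_0$ in the form $\|\nabla f\|_{L^2(\mathcal{F}_0)}\leq C(\|D(f)\|_{L^2(\mathcal{F}_0)}+\|f\|_{L^2(\mathcal{F}_0)})$, valid for divergence-free vector fields tangent to $\partial\mathcal{S}_0$ that decay at infinity; this follows by a standard cut-off argument splitting $\mathcal{F}_0$ into a bounded region near $\partial\mathcal{S}_0$ (where the classical second Korn inequality applies) and the complement (where the algebraic identity $\int_{\R^3}|\nabla g|^2=2\int_{\R^3}|D(g)|^2$ for divergence-free fields handles the remainder, with the cut-off terms being controlled by $\|f\|_{L^2}+\|D(f)\|_{L^2}$). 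Substituting gives
\begin{equation*}
\|f\|_{L^2(\partial\mathcal{S}_0)}^2\leq C_1\|f\|_{L^2(\mathcal{F}_0)}\|D(f)\|_{L^2(\mathcal{F}_0)}+C_2\|f\|_{L^2(\mathcal{F}_0)}^2,
\end{equation*}
whence, taking square roots via $\sqrt{a+b}\leq\sqrt{a}+\sqrt{b}$,
\begin{equation*}
\|f\|_{L^2(\partial\mathcal{S}_0)}\leq C_1^{1/2}\|f\|_{L^2(\mathcal{F}_0)}^{1/2}\|D(f)\|_{L^2(\mathcal{F}_0)}^{1/2}+C_2^{1/2}\|f\|_{L^2(\mathcal{F}_0)}.
\end{equation*}

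The last step, which is the main subtle point, is to redistribute the mixed term using Young's inequality with the non-obvious exponents $p=4/3$ and $q=4$, so that $\|f\|^{1/2}$ becomes $\|f\|^{2/3}$ and $\|D(f)\|^{1/2}$ becomes $\|D(f)\|^2$. Writing $ab=(\lambda^{3/4}a)(\lambda^{-3/4}b)$ and applying the weighted Young inequality gives
\begin{equation*}
\|f\|^{1/2}\|D(f)\|^{1/2}\leq\frac{3\lambda}{4}\|f\|^{2/3}+\frac{1}{4\lambda^{3}}\|D(f)\|^{2},
\end{equation*}
and choosing $\lambda=(\gamma/C_1)^{1/3}$ (to eliminate the constant in front of $\|D(f)\|^2/(4\gamma)$) produces exactly the stated inequality, with the constant $C$ of the lemma absorbing $\tfrac34 C_1^{2/3}$ and $C_2^{1/2}$. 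The only real obstacle is to recognize the correct Young's exponents — everything else is classical harmonic-analytic bookkeeping on $\mathcal{F}_0$.
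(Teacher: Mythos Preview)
Your proof is correct and follows the same three-step route as the paper: trace inequality, then Korn, then Young with exponents $4/3$ and $4$; you are simply more explicit where the paper cites these ingredients as standard. The only slip is the value of $\lambda$ in the last step --- to make the coefficient of $\|D(f)\|^2$ equal to $\tfrac{1}{4\gamma}$ after multiplying by $C_1^{1/2}$ you need $\lambda^3=C_1^{1/2}\gamma$, not $\lambda=(\gamma/C_1)^{1/3}$ --- but this is harmless since any $\lambda\propto\gamma^{1/3}$ gives the stated form.
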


\begin{proof}
We first apply the following standard trace inequality:
\begin{equation*}
\| f \|_{L^2 (\partial \mathcal{S}_0 )}  \leq  C  \|f
\|_{L^2( {\mathcal{F}_0} )}^{1/2} \|  f \|_{H^1 ( {\mathcal{F}_0} )}^{1/2} ,
 \end{equation*}
  then the Korn inequality to find
\begin{equation*}
\| f \|_{L^2 (\partial \mathcal{S}_0 )}  \leq C( \|f \|_{L^2 ( {\mathcal{F}_0} )}^{1/2} \| D(f)
\|_{L^2 ( {\mathcal{F}_0} )}^{1/2} +\|f \|_{L^2 ( {\mathcal{F}_0} )} )
\end{equation*}
and finally Young's inequality to conclude.
\end{proof}
In order to apply this lemma we are going to substitute to the vector fields $  u^E_{\mathcal{S}}$ and $w_{\mathcal{S}}$ another divergence free vector field with the same traces on $\partial \mathcal{S}_0$ but with
a better decay at infinity, as we did in Section \ref{weaksolution}.
Indeed  let   $\chi$ be a smooth cut-off function defined on $\overline{\mathcal{F}_0}$ such that $\chi = 1$ in $\Gamma_{c}$ and  $\chi = 0$ in $\mathcal{F}_0 \setminus \Gamma_{2 c}$, where
\begin{equation*}
\Gamma_{c} := \{  x \in  \mathcal{F}_0 / \  d(x)  < c  \} \text{ with }  d(x) := dist (x,  \partial   \mathcal{S}_0 ) .
\end{equation*}
Let us denote
\begin{equation*}
    \psi^E_{\mathcal{S}} (t,x) :=  \frac12 (  \ell^E (t) \wedge  x  - r^E  (t)  | x |^{2} )   \text{ and } \tilde{u}^E_{\mathcal{S}} := \curl ( \chi  \psi^E_{\mathcal{S}} ) ,
\end{equation*}
and let us define similarly $\tilde{w}_{\mathcal{S}}$.
By Cauchy-Schwarz inequality we obtain
\begin{equation*}
 I_1 \leq
4\alpha\nu\int_0^t \|u^E- \tilde{u}^E_{\mathcal{S}} \|_{L^2(\partial \mathcal{S}_0)} \|w - \tilde{w}_{\mathcal{S}} \|_{L^2(\partial
\mathcal{S}_0)}  \leq
\alpha\nu C \int_0^t \|w - \tilde{w}_{\mathcal{S}} \|_{L^2(\partial
\mathcal{S}_0)}
\end{equation*}
and then applying Lemma  \ref{boundary} with $\gamma = \frac{\alpha C}{4}$
\begin{eqnarray*}
I_1  &\leq&  {\nu} \int_0^t \| D(w-\tilde{w}_{\mathcal{S}})
\|_{L^2( {\mathcal{F}_0} )}^2 + \alpha^{4/3}\nu C \int_0^t \|w-\tilde{w}_{\mathcal{S}}
\|_{L^2( {\mathcal{F}_0} )}^{2/3} +\alpha\nu C\int_0^t \|w - \tilde{w}_{\mathcal{S}} \|_{L^2( {\mathcal{F}_0} )}
\\  &\leq&  {\nu} \int_0^t \| D(w)
\|_{L^2( {\mathcal{F}_0} )}^2 + \nu C \int_0^t \|w  \|_{\mathcal{H}}^2 +  \alpha^{4/3}\nu C \int_0^t  \|w  \|_{\mathcal{H}} ^{2/3} +  \alpha\nu C \int_0^t \|w  \|_{\mathcal{H}}  .
\end{eqnarray*}

Now, thanks again to  Cauchy-Schwarz inequality and  applying Lemma  \ref{boundary} but this time with $\gamma = \frac{C}{4}$ we get
\begin{eqnarray*}
\nonumber I_3 & \leq & 4 \nu
\int_0^t \|D(u^E)n\|_{L^2(\partial \mathcal{S}_0)} \| w - \tilde{w}_{\mathcal{S}} \|_{L^2(\partial
\mathcal{S}_0)} \\ & \leq &  \nu C
\int_0^t \| w - \tilde{w}_{\mathcal{S}} \|_{L^2(\partial
\mathcal{S}_0)}
\\ \nonumber  & \leq& {\nu}\int_0^t\| D(w) \|_{L^2( {\mathcal{F}_0} )}^2 + \nu C \int_0^t \|w  \|_{\mathcal{H}}^2
+ \nu C \int_0^t
\|w \|_{\mathcal{H}}^{2/3} +  \nu   C  \int_0^t  \|w \|_{\mathcal{H}}.
\end{eqnarray*}
We simply estimate the following term by
$$I_2  \leq 2\nu \int_0^t \|\Delta u^E \|_{L^2( {\mathcal{F}_0} )} \|w \|_{L^2( {\mathcal{F}_0} )}  \leq   \nu C \int_0^t
\|w \|_{L^2( {\mathcal{F}_0} )}.$$
Finally it is straightforward that we can estimate the last  terms by
\begin{equation*}
I_4 + I_5 \leq\nu C \int_0^t \|w\|_{\mathcal{H} } \ \text{ and } \  I_6 \leq  C\int_0^t \|w\|_{\mathcal{H} }^2
\end{equation*}
thanks to \eqref{bcontiE}.

We deduce from the above relations  that
\begin{equation*}
%\label{estim}
 \|w(t)\|_{\mathcal{H}}^2  + 2 \nu \int_0^t \| D(w)
\|_{L^2( {\mathcal{F}_0} )}^2   \leq \beta(t) +  C\int_0^t\|w\|_{\mathcal{H}}^2 ,
\end{equation*}
with
\begin{equation*}
\beta(t) :=
 \|w_0\|_{\mathcal{H}}^2  +\nu C\int_0^t\|w\|_{\mathcal{H}}^2
+
 ( 1+ \alpha^{4/3} )\nu C \int_0^t \|w
\|_{\mathcal{H}}^{2/3}
 + (1+ \alpha )\nu C  \int_0^t \|w \|_{\mathcal{H}}  .
\end{equation*}
Since $ \|w(t)\|_{\mathcal{H}} \leq C$ we can deduce that
$$ \beta(t) \leq   \|w_0\|_{\mathcal{H}}^2  +(1 + \alpha +  \alpha^{4/3} ) \nu t C \|w
\|_{L^\infty(0,T;\mathcal{H})}^{2/3},$$
and finally, the Gronwall Lemma implies that
\begin{eqnarray*}
 \|w(t)\|_{\mathcal{H}}^2 & \leq & \|w_0\|_{\mathcal{H}}^2
+ (1 + \alpha +  \alpha^{4/3} )\nu  t  C \|w
\|_{L^\infty(0,T;\mathcal{H})}^{2/3}
 \\&& + C \int_0^t \Bigr(  \|w_0\|_{\mathcal{H}}^2
+  s (1 + \alpha +  \alpha^{4/3} ) \nu C\|w
\|_{L^\infty(0,T;\mathcal{H})}^{2/3}\Bigr)ds \exp (C t )
\\ & \leq &  C \Big(\|w_0\|_{ \mathcal{H}}^2 +
 ( 1+ \alpha + \alpha^{4/3}) \nu  \|w
\|_{L^\infty(0,T;\mathcal{H} )}^{2/3} \Big),
\end{eqnarray*}
where $C$ is a constant depending on $T$.

By setting  $ u_0 = u^E_0 $, we obtain estimate \eqref{clasicesti2}.

The proof of Theorem \ref{clasic} is then complete.
 \par
\par
\

 {\bf Acknowledgements.} The authors would like to express their gratitude to the
anonymous referee for his/her several suggestions which have improved
the paper.

 This work was partially supported by the CNRS-FAPESP cooperation. The first author thanks the
Laboratoire Jacques-Louis Lions - Paris 6, while the second author  thanks  the  IMECC-UNICAMP for their kind hospitality during their visits there. The first author was partially supported by CNPq-Brazil, grant 303302/2009-7 and  by FAPESP-Brazil, grant 2007/51490-7.
The second author was partially supported by the Agence Nationale de la Recherche, Project CISIFS,
grant ANR-09-BLAN-0213-02.  He also thanks Olivier Glass, Matthieu Hillairet and David G\'erard-Varet  for some fruitful discussions.


\begin{thebibliography}{50}

\bibitem{BGP}
C. Bardos, F. Golse,   L. Paillard.
\emph{The incompressible Euler limit of the Boltzmann equation with accommodation boundary condition.}
Commun. Math. Sci. 10 (2012), no. 1, 159-190.



\bibitem{BardosTiti}
 C.~Bardos, E.~S.~Titi.
  \textit{Euler equations for an ideal incompressible fluid.} (Russian) Uspekhi Mat. Nauk 62 (2007), no. 3(375), 5-46; translation in Russian Math. Surveys 62 (2007), no. 3, 409-451.


\bibitem{dV}
H. Beirao da Veiga, F.  Crispo.
\emph{A missed persistence property for the Euler equations, and its effect on inviscid limits.}
Nonlinearity 25 (2012) 1661-1669.
%

\bibitem{CM}
 T. Chambrion, A. Munnier.
\emph{ Locomotion and control of a self-propelled shape-changing body in a fluid.} J. Nonlinear Sci. 21 (2011), no. 3, 325-385.

\bibitem{chapouly}
M. Chapouly.
\emph{ On the global null controllability of a Navier-Stokes system with Navier slip boundary conditions.} J. Differential Equations 247 (2009), no. 7, 2094-2123.


\bibitem{Childress} S. Childress.
 {\it An introduction to theoretical fluid mechanics}. Courant Lecture Notes in Mathematics, 19. Courant Institute of Mathematical Sciences, New York; American Mathematical Society, Providence, RI, 2009.

\bibitem{cmr}
T.~Clopeau, A.~Mikelic, R.~Robert.
 \textit{On the vanishing viscosity limit for the 2D incompressible Navier-Stokes equations with the friction type boundary conditions.}
Nonlinearity 11 (1998), no. 6, 1625-1636.

\bibitem{Conca1}
 C.~Conca, J.~A.~San Martin, M.~Tucsnak.
  \textit{Existence of solutions for the equations modelling the motion of a rigid body in a viscous fluid.}
   Comm. Partial Differential Equations 25 (2000), no. 5-6, 1019-1042.


 \bibitem{constantin}
  P.~Constantin.
   \textit{On the Euler equations of incompressible fluids.}
 Bull. Amer. Math. Soc. (N.S.) 44 (2007), no. 4, 603-621.


\bibitem{coco}
F. Coron.
 \emph{Derivation of slip boundary conditions for the Navier-Stokes system from the Boltzmann equation.}
J. Statist. Phys. 54 (1989), no. 3-4, 829-857.


\bibitem{coron}
J.-M. Coron.
 \emph{On the controllability of the $2$-D incompressible NavierStokes equations with the Navier slip boundary conditions.} ESAIM Control Optim. Calc. Var. 1 (1995/96), 35-75.

\bibitem{coron2}
J.-M. Coron.
\emph{On the null asymptotic stabilization of the two-dimensional incompressible Euler equations in a simply connected domain.} SIAM J. Control Optim. 37 (1999), no. 6, 1874-1896.


\bibitem{dadgv}
A.-L. Dalibard, D. G\'erard-Varet.
\emph{Effective boundary condition at a rough surface starting from a slip condition.}
J. Differential Equations 251 (2011), no. 12, 3450-3487.


\bibitem{DE1}
B.~Desjardins,  M.~J.~Esteban.
  \textit{On weak solutions for fluid-rigid structure interaction: compressible and incompressible models.}
  Comm. Partial Differential Equations 25 (2000), no. 7-8, 1399-1413.

\bibitem{DE2}
B.~Desjardins,  M.~J.~Esteban.
  \textit{Existence of weak solutions for the motion of rigid bodies in a viscous fluid.}
   Arch. Ration. Mech. Anal. 146 (1999), no. 1, 59-71.



 \bibitem{E}
 W.~E.
  \textit{Boundary layer theory and the zero-viscosity limit of the Navier-Stokes equation.}
  Acta Math. Sin. (Engl. Ser.) 16 (2000), no. 2, 207-218.



%
\bibitem{FE1}
E.~Feireisl.
 {\em On the motion of rigid bodies in a viscous incompressible fluid. Dedicated to Philippe B\'enilan.}
J. Evol. Equ. 3 (2003), no. 3, 419-441.

%
\bibitem{FE2}
E.~Feireisl.
 {\em On the motion of rigid bodies in a viscous fluid.}
  Mathematical theory in fluid mechanics (Paseky, 2001). Appl. Math. 47 (2002), no. 6, 463-484.

%
\bibitem{FE3}
E.~Feireisl.
 {\em On the motion of rigid bodies in a viscous compressible fluid.}
  Arch. Ration. Mech. Anal. 167 (2003), no. 4, 281-308.
%

\bibitem{Galdi}
G. P. Galdi.
 {\it An Introduction to the Mathematical Theory of the Navier-Stokes Equations}, Vol. I, Springer, New York, 1994.



 \bibitem{DGVD}
 D.~G\'erard-Varet, E.~Dormy.
 \textit{On the ill-posedness of the Prandtl equation.}
  J. Amer. Math. Soc. 23 (2010), no. 2, 591-609.



\bibitem{DGV-H}
D.~G\'erard-Varet, M. Hillairet.
{\em Existence of weak solutions up to collision
for viscous fluid-solid systems with slip.}
Preprint 2012, {\tt arXiv:1207.0469}.



 \bibitem{dgvmasmoudi}
D. G\'erard-Varet, N. Masmoudi.
\emph{Relevance of the slip condition for fluid flows near an irregular boundary.}
Comm. Math. Phys. 295 (2010), no. 1, 99-137.


\bibitem{gamblin2}
P.~Gamblin, X.~Saint~Raymond.
\newblock \emph{On three-dimensional vortex patches.}
\newblock  Bull. Soc. Math. France 123 (1995), no. 3, 375-424.




\bibitem{shrinking}
O.~Glass,  C.~Lacave,  F.~Sueur.
{\em On the motion  of a small body immersed in a two dimensional incompressible perfect fluid.}
 Preprint 2011, {\tt arXiv:1104.5404}.  Bulletin de la SMF. To appear.



\bibitem{GS}
 O.~Glass, F.~Sueur.
 \textit{On the motion  of a rigid body in a two-dimensional irregular ideal flow}.
SIAM J. on Mathematical Analysis 44 (2012), no. 5, 3101-3126.


\bibitem{GS3}
O.~Glass, F.~Sueur.
 \textit{Low regularity solutions for the two-dimensional ``rigid body + incompressible Euler" system.}
  Preprint 2012, {\tt hal-00682976}.



\bibitem{geodesic}
O.~Glass, F.~Sueur.
 {\em The movement  of a solid in an incompressible perfect fluid as a geodesic flow.}
 Proceedings  of the AMS. 140 (2012), no. 6, 2155-2168.


\bibitem{uniq}
 O.~Glass, F.~Sueur.
\textit{Uniqueness results for weak solutions of two-dimensional  fluid-solid systems}.
 Preprint 2012, {\tt arXiv:1203.2894}.

 \bibitem{ogfstt}
 O.~Glass, F.~Sueur, T.~Takahashi.
  \textit{Smoothness of the motion of a rigid body immersed in an incompressible perfect fluid}.
Ann. Sci. \'Ecole Norm. Sup. 45 (2012), no. 1, 1-51.



%
\bibitem{gm}
C.~Grandmont,Y.~Maday.
{\em  Existence for an unsteady fluid-structure interaction problem.}
 M2AN Math. Model. Numer. Anal. 34 (2000), no. 3, 609-636.


 \bibitem{grenier}
 E.~Grenier.
  \textit{Boundary layers.}
   Handbook of mathematical fluid dynamics.
   Vol. III, 245-309, 2004.

 \bibitem{guo}
 Y.~Guo, T.~T.~Nguyen.
  \textit{A note on the Prandtl boundary layers.}
 Comm. Pure Appl. Math. 64 (2011), no. 10, 1416-1438.


%
\bibitem{hs} K.-H. Hoffmann and V. N. Starovoitov.
 {\em On a motion of a solid body in a viscous fluid. Two-dimensional case.}
  Adv. Math. Sci. Appl. 9 (1999), no. 2, 633-648.
 %


\bibitem{ht}
 J.-G.~Houot, J.~San Martin, M.~Tucsnak.
 \textit{Existence and uniqueness of solutions for the equations modelling the motion of rigid bodies in a perfect fluid}.
  J. Funct. Anal. 259 (2010), no. 11, 2856-2885.

\bibitem{Iftimie-Planas}
 D.~Iftimie, G.~Planas.
\emph{Inviscid limits for the Navier-Stokes
equations with Navier friction boundary conditions.}
Nonlinearity 19 (2006),  899--918.

\bibitem{Iftimie-Sueur}
D.~Iftimie,   F.~Sueur.
\emph{Viscous boundary layers for the Navier-Stokes equations with the Navier slip conditions}. Arch. Ration. Mech. Analysis  199 (2011), no. 1, 145-175.



\bibitem{Tosio}
 T.~Kato.
  \textit{Remarks on zero viscosity limit for nonstationary Navier-Stokes flows with boundary.}
  Seminar on nonlinear partial differential equations, 85-98, Math. Sci. Res. Inst. Publ., 2,  1984.

%\bibitem{majda}
%A. Majda, A. Bertozzi.
%Vorticity and incompressible flow.
%Cambridge Texts in Applied Mathematics, 27.
%2002.
%%


%
\bibitem{MasmoudiRousset}
 N. Masmoudi, F. Rousset.
\emph{ Uniform regularity for the Navier-Stokes equations
with Navier boundary condition.}
Arch. Ration. Mech. Analysis
 203 (2012), no. 2, 529-575.

\bibitem{MasmoudiLSR}
N. Masmoudi,  L. Saint-Raymond.
\emph{From the Boltzmann equation to the Stokes-Fourier system in a bounded domain.} Comm. Pure Appl. Math. 56 (2003), no. 9, 1263-1293.

\bibitem{Mclean}
W. McLean.
 {\em Strongly elliptic systems and boundary integral equations.}
  Cambridge University Press, Cambridge, 2000.

\bibitem{Navier}
C-L. Navier.
\emph{M\'emoire sur les lois du mouvement des fluides}. Mem. Acad. R. Sci. Paris.
6 (1823), 389-416.

\bibitem{Ortega}
J.~H.~Ortega, L.~Rosier, T.~Takahashi.
  \textit{Classical solutions for the equations modelling the motion of a ball in a bidimensional incompressible perfect fluid.}
   M2AN Math. Model. Numer. Anal. 39 (2005), no. 1, 79-108.


\bibitem{Ortega2}
J.~H.~Ortega, L.~Rosier, T.~Takahashi.
 \emph{On the motion of a rigid body immersed in a bidimensional incompressible perfect fluid,} Ann. Inst. H. Poincar\'e Anal. Non Lin\'eaire 24 (2007), no. 1, 139-165.

\bibitem{Paddick}
M. Paddick.
\emph{Stability and instability of Navier boundary layers.}
Preprint 2011,  {\tt arXiv:1103.5009}.


\bibitem{rosier}
C.~Rosier, L.~Rosier.
   \textit{Smooth solutions for the motion of a ball in an incompressible perfect fluid},
  J. Funct. Anal.  256 (2009), no. 5, 1618-1641.

\bibitem{Conca2}
 J.~A.~San Martin, V.~Starovoitov, M.~Tucsnak.
  \textit{Global weak solutions for the two-dimensional motion of several rigid bodies in an incompressible viscous fluid.}
   Arch. Ration. Mech. Anal. 161 (2002), no. 2, 113-147.


\bibitem{Serre}
 D.~Serre.
  \textit{Chute libre d'un solide dans un fluide visqueux incompressible. Existence.}
  Japan J. Appl. Math. 4 (1987), no. 1, 99-110.


\bibitem{Simon}
J.~Simon. \emph{Compact sets in $L^p(0,T;B)$.} Ann. Mat. Pura Appl., ser. IV, CXLVI (1987), 65-96.

\bibitem{KatoBody}
 F.~Sueur.
  \textit{A Kato type Theorem for the inviscid limit of the Navier-Stokes equations with a moving rigid body.}
Communications in Math. Physics  316 (2012), no. 3, 783-808.


\bibitem{Temam}
 R.~Temam.
  \textit{Probl\`emes math\'ematiques en plasticit\'e.}  M\'ethodes Math\'ematiques de l'Informatique, 12. Gauthier-Villars,  1983.

%\bibitem{TemamNS}
% R.~Temam.
%  \textit{Navier-Stokes equations and nonlinear functional analysis. Second edition.} CBMS-NSF Regional Conference Series in Applied Mathematics
% 66, 1995.

\bibitem{TemamNS}
      R.~Temam,
     \textit{Navier-Stokes Equations: Theory and Numerical Analysis.}
Studies in Mathematics and its applications. Volume \textbf{2}, The Netherlands, 1984.



\bibitem{xin}
L. Wang, Z. Xin, A. Zang.
\emph{Vanishing viscous limits for 3D Navier-Stokes
equations with Navier-slip boundary
conditions.} 	
Journal of Math. Fluid Mech.  14 (2012), no. 4, 791-825.

  \bibitem{WangZang}
  Y.~Wang, A.~Zang.
  \textit{Smooth solutions for motion of a rigid body of general form in an incompressible perfect fluid.}
   J. Differential Equations 252 (2012), 4259-4288.


\end{thebibliography}
\end{document}